\theoremstyle{plain}
\newtheorem{theorem}{Theorem}[section] % 1st argument is your name for it
\newtheorem{lemma}[theorem]{Lemma}     % 2nd argument is what is printed
\newtheorem{corollary}[theorem]{Corollary}
\newtheorem{proposition}[theorem]{Proposition}
\theoremstyle{definition}
\newtheorem{assumption}{Assumption} 
\newtheorem{definition}[theorem]{Definition}
\theoremstyle{remark}
\newtheorem{remark}[theorem]{Remark}
\newcommand{\defn}[1]{\emph{#1}}
\newcommand{\ie}{i.e.\ }
\newcommand{\eg}{e.g.\ }
\newcommand{\N}{\mathbb{N}}
\newcommand{\Z}{\mathbb{Z}}
\newcommand{\R}{\mathbb{R}}
\newcommand{\C}{\mathbb{C}}
\newcommand{\U}{\mathbb{H}}
\renewcommand{\P}{\mathbb{P}}
\newcommand{\cat}[1]{\mathcal{#1}}
\newcommand{\perv}[3]{\cat{P}^{#1}_{#3}{[#2]}}
\newcommand{\constr}[1]{\cat{D}^\textrm{c}(#1)}
\newcommand{\constran}[1]{\cat{D}^{\textrm{$\R$-an-c}}(#1)}
\newcommand{\derfuk}[1]{\cat{DAF}(#1)}
\newcommand{\id}{id}
\newcommand{\mor}[2]{{\mathrm{Hom}}(#1,#2)}
\newcommand{\ext}[3]{{\mathrm{Ext}^{#1}}(#2,#3)}
\newcommand{\Ext}[4]{{\mathrm{Ext}_{#1}^{#2}}(#3,#4)}
\newcommand{\stab}[1]{\mathrm{Stab}(#1)}
\newcommand{\stabo}[1]{\mathrm{Stab}_0(#1)}
\title{Stability conditions, torsion theories and tilting} % This is the full title of the paper
\author{Jonathan Woolf}
\begin{document}
\maketitle

\begin{abstract}
The space of stability conditions on a triangulated category is naturally partitioned into subsets $U(\cat{A})$ of stability conditions with a given heart $\cat{A}$. If $\cat{A}$ has finite length and $n$ simple objects then $U(\cat{A})$ has a simple geometry, depending only on $n$. Furthermore, Bridgeland has shown that if $\cat{B}$ is obtained from $\cat{A}$ by a simple tilt, \ie by tilting at a torsion theory generated by one simple object, then the intersection of the closures of $U(\cat{A})$ and $U(\cat{B})$ has codimension one. 

Suppose that $\cat{A}$, and any heart obtained from it by a finite sequence of (left or right) tilts at simple objects, has 
% added post-acceptance
finite length and 
finitely many indecomposable objects. Then we show that the closures of $U(\cat{A})$ and $U(\cat{B})$ intersect if and only if $\cat{A}$ and $\cat{B}$ are related by a tilt, and that the dimension of the intersection can be determined from the torsion theory. In this situation the union of subsets $U(\cat{B})$, where $\cat{B}$ is obtained from $\cat{A}$ by a finite sequence of simple tilts, forms a component of the space of stability conditions. We illustrate this by computing (a component of) the space of stability conditions on the constructible derived category of the complex projective line stratified by a point and its complement.  
\end{abstract}

%\part{Use this type of header for very long papers only}
% use lowercase except for proper names

\section{Introduction}

Bridgeland introduced the notion of a stability condition on a triangulated category $\cat{C}$ in \cite{MR2373143} and proved that the set of locally-finite stability conditions forms a complex manifold $\stab{\cat{C}}$. This space provides a geometric picture of many aspects of the category and carries a natural action of its automorphisms. Each stability condition determines an abelian heart in the triangulated category. If we denote  the subset of locally-finite stability conditions with heart $\cat{A}$ by  $U(\cat{A})$ then the $U(\cat{A})$ partition $\stab{\cat{C}}$.  If the heart $\cat{A}$ has finite length and  $n$ simple objects then $U(\cat{A})\cong \U^n$ where $$\U = \{ z\in \C \ |\ \mathrm{Im}\, z \geq 0\} - \R_{\geq 0}.$$ 
In this case $U(\cat{A}) \cap \overline{U(\cat{B})}$ is non-empty and of codimension one when $\cat{B}$ is a simple left tilt of $\cat{A}$ \cite[Lemma 5.5]{Bridgeland:fk}, \ie when $\cat{B}$ is obtained from $\cat{A}$ by tilting in the sense of \cite{Happel:1996uq} at a torsion theory generated by a simple object.

We generalise this result as follows. Fix a heart $\cat{A}$ and assume that any heart obtained from it by a finite sequence of simple tilts 
% %added post-acceptance
has finite length and 
finitely many indecomposable objects. In particular each of these hearts therefore has 
%%deleted post-acceptance
%finite length, 
%%
finitely many simple objects and finitely many torsion theories.  In this case we show that $$U(\cat{A}) \cap \overline{U(\cat{B})} \neq \emptyset$$ if and only if $\cat{B}$ is the left tilt of $\cat{A}$ at some torsion theory. The codimension of the intersection is governed by the size of the torsion theory, lower codimension corresponding to a smaller torsion theory (see Corollary~\ref{intersection corollary} for a precise statement). Proposition~\ref{constraints} says that points in the boundary of $\overline{U(\cat{A})}$ correspond to limiting central charges of stability conditions in $U(\cat{A})$ for which the central charge of no semi-stable object vanishes. Furthermore, the union of subsets of stability conditions with hearts obtained from $\cat{A}$ by sequences of simple tilts forms a connected component of $\stab{\cat{C}}$ (Theorem~\ref{component thm}). Together these results yield a description of a component of $\stab{\cat{C}}$ in terms of the combinatorics of tilting in $\cat{C}$.

In brief, \S\ref{torsion theories}, \S\ref{tilting} and \S\ref{tilting at simples} provide the necessary background on the relation between $t$-structures, torsion theories and tilting in triangulated categories.  \S\ref{stability conditions} recalls the definition and basic properties of stability conditions. This material is standard and is included for expository purposes only. The main results, outlined above, are in \S\ref{main results}. They are illustrated in \S\ref{constructible} in which we compute (a component of) the space of stability conditions on the constructible derived category of $\P^1$ stratified by a point and its complement. (Algebraists may prefer to view this example as the derived category of representations of the bound quiver
$$
\xymatrix{\cdot \ar@/{}_{.5pc}/[r]_c& \cdot \ar@/{}_{.5pc}/[l]_v} \qquad vc=0
$$
and symplectic geometers as the subcategory of the derived asymptotic Fukaya category of $T^*\P^1$ generated by the zero section and a cotangent fibre.) This example is sufficiently simple to allow explicit computation and yet complicated enough to exhibit all the interesting features of the results. Finally \S\ref{coherent} discusses the contrasting case of coherent sheaves on $\P^1$, following the detailed treatment in \cite{MR2219846}. In this example the hearts do not have finitely many indecomposables  and we no longer obtain an entire component of the space of stability conditions by iterating simple tilts.

%\begin{acknowledgements}\label{ackref}
I'm grateful to Ivan Smith and Michael Butler for several helpful conversations and to Tom Bridgeland for his comments and for pointing me towards the example of coherent sheaves on $\P^1$ as a useful counterpoint. I am indebted to the referee who spotted several gaps and errors in the original version. 
%\end{acknowledgements}

\section{Tilting and stability conditions}
\label{theory}

We fix some notation. Let $\cat{C}$ be an additive category. We write $c\in \cat{C}$ to mean $c$ is an object of $\cat{C}$. We will use the term \defn{subcategory} to mean strict, full subcategory. When $S$ is a subcategory we write $\cat{S}^\perp$ for the subcategory on the objects 
$$
\{ c \in \cat{C} \ | \ \mor{s}{c} = 0 \ \forall s \in \cat{S} \}
$$
and similarly ${}^\perp\cat{S}$ for $\{ c \in \cat{C} \ | \ \mor{c}{s} = 0 \ \forall s \in \cat{S} \}$. When $\cat{A}$ and $\cat{B}$ are subcategories of $\cat{C}$ we write $\cat{A}\cap \cat{B}$ for the subcategory on objects which lie in both $\cat{A}$ and $\cat{B}$.

Suppose $\cat{C}$ is triangulated with shift functor $[1]$. Exact triangles in $\cat{C}$ will be denoted either by $a\to b\to c \to a[1]$ or by a diagram
$$
\xymatrix{
a \ar[rr] && b \ar[dl] \\
& c \ar@{-->}[ul] & 
}
$$
where the dotted arrow denotes a map $c \to a[1]$. We will always assume that $\cat{C}$ is essentially small so that isomorphism classes of objects form a set. Given sets $S_i$ of objects for $i\in I$  let $\langle S_i\ |\ i\in I \rangle$ denote the ext-closed subcategory generated by objects isomorphic to an element  in some $S_i$. We will use the same notation when the $S_i$ are subcategories of $\cat{C}$. 

\subsection{$t$-structures}
\label{torsion theories}

\begin{definition}
A \defn{$t$-structure} on a triangulated category $\cat{C}$ is a subcategory $\cat{D}\subset \cat{C}$ such that $\cat{D}[1] \subset \cat{D}$ and for each $c\in \cat{C}$ there is an exact triangle $d \to c \to d' \to d[1]$ with $d \in \cat{D}$ and $d'\in \cat{D}^\perp$. The subcategory $\cat{D} \cap \cat{D}^\perp[1]$ is abelian \cite[Th\'eor\`eme 1.3.6]{bbd} and is known as the \defn{heart} of the $t$-structure.
\end{definition}
 It is more common to define a $t$-structure to be a pair $(\cat{D}^{\leq 0}, \cat{D}^{\geq0})$ of sub-categories satisfying a short list of conditions, see \cite[\S1.3]{bbd}. This definition is equivalent to the one above if we put $\cat{D} = \cat{D}^{\leq 0}$ and $\cat{D}^\perp = \cat{D}^{\geq 0}[-1]$. The subcategory $\cat{D}$ is sometimes referred to as a $t$-category or an aisle. 
 
 It follows from the existence of the triangle that $\cat{D}$ is right admissible, \ie that there is a right adjoint $\tau^{\leq 0}$ to the inclusion $\cat{D} \hookrightarrow \cat{C}$ and that $\cat{D}^\perp$ is left admissible with left adjoint $\tau^{\geq 1}$ to the inclusion $\cat{D}^\perp \hookrightarrow \cat{C}$. These adjoints are referred to as truncation functors. The exact triangle associated to an object $c$ is unique (up to isomorphism) and can be written 
$$
\tau^{\leq 0}c \to c \to \tau^{\geq 1}c \to \tau^{\leq 0}c [1]
$$
where the first two maps come respectively from the counit and unit of the adjunctions. The truncations give rise to cohomological functors $$H^n = \tau^{\leq n}\tau^{\geq n}: \cat{C} \to \cat{A}$$ to the heart $\cat{A}$ where $\tau^{\leq n} c = \tau^{\leq 0} (c[n])[-n]$ and $\tau^{\geq n} c = \tau^{\geq 1} (c[n-1])[1-n]$.

A $t$-structure $\cat{D}$ is \defn{bounded} if any object of $\cat{C}$ lies in $\cat{D}[-n] \cap \cat{D}^\perp[n]$ for some $n\in \N$. In the sequel we will always assume that $t$-structures are bounded. This has two important consequences. Firstly, a bounded $t$-structure is completely determined by its heart $\cat{A}$; the $t$-structure is recovered as 
$$
\langle \cat{A}, \cat{A}[1] , \cat{A}[2] , \ldots \rangle.
$$
Secondly, the inclusion $\cat{A} \hookrightarrow \cat{C}$ induces an isomorphism $K(\cat{A}) \cong K(\cat{C})$ of Grothendieck groups. Closely related to this is the easy but important fact that if $\cat{A} \subset \cat{B}$ are hearts of (bounded) $t$-structures then $\cat{A}=\cat{B}$.

A $t$-structure is said to be \defn{faithful} if there is an equivalence $\cat{D}(\cat{A}) \simeq \cat{C}$
where $\cat{D}(\cat{A})$ is the bounded derived category of the heart $\cat{A}$. Note that, in general, there is not even a naturally defined functor $F: \cat{D}(\cat{A}) \to \cat{C}$ extending the inclusion $\cat{A} \to \cat{C}$. However, if such an $F$ does exist then it is an equivalence if and only if the Ext groups (in $\cat{C}$) between any two objects in $\cat{A}$ are generated in degree $1$ \cite[Chapter 5, Theorem 3.7.3]{gema}.  In the sequel, we do not assume that $t$-structures are faithful (although some of those we consider will be).

\subsection{Torsion theories and tilting}
\label{tilting}

\begin{definition}
A \defn{torsion theory} in an abelian category $\cat{A}$ is a subcategory $\cat{T}$ such that every $a\in \cat{A}$ fits into a short exact sequence
$$
0 \to t \to a \to f \to 0
$$
for some $t\in \cat{T}$ and $f\in \cat{T}^\perp$.  Objects of $\cat{T}$ are known as torsion objects and objects of $\cat{F}=\cat{T}^\perp$ as free objects; the motivating example is the subcategories of torsion and free abelian groups. 
\end{definition}
Torsion theories are more commonly defined as pairs $(\cat{T},\cat{F})$ of subcategories such that $\mor{t}{f}=0$ whenever $t\in \cat{T}$ and $f\in \cat{F}$ and every $a\in \cat{A}$ sits in a short exact sequence $0\to t \to a \to f \to 0$ with $t\in \cat{T}$ and $f\in \cat{F}$, see for example \cite[Definition 1.1]{MR2327478}. The definitions are equivalent: if $(\cat{T}, \cat{F})$ is a torsion theory then $\cat{F}= \cat{T}^\perp$.

  The short exact sequence $0\to t\to a \to f \to 0$ is unique up to isomorphism. The first term determines a right adjoint to the inclusion $\cat{T} \hookrightarrow \cat{A}$ and the last term a left adjoint to the inclusion $\cat{F} \hookrightarrow \cat{A}$. It follows that $\cat{T}$ is closed under factors, extensions and coproducts and that $\cat{F}$ is closed under subobjects, extensions and products. 

Torsion theories in $\cat{A}$ form a poset with the ordering $\cat{T} \leq \cat{T}' \iff \cat{T} \subset \cat{T}'$. The (bounded) $t$-structures on $\cat{C}$ also form a poset under the ordering $\cat{D} \leq \cat{D}' \iff \cat{D} \subset \cat{D}'$. 
\begin{proposition}
\label{HRS}
Let $\cat{C}$ be a triangulated category, and $\cat{D} \subset \cat{C}$ a $t$-structure with heart $\cat{A}$.  Then there is a canonical isomorphism between the poset of torsion theories in $\cat{A}$ and the interval in the poset of $t$-structures consisting of $t$-structures $\cat{E}$ with $\cat{D} \subset \cat{E} \subset \cat{D}[-1]$. 
\end{proposition}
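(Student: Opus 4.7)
The plan is to construct explicit order-preserving maps in both directions and check they are mutual inverses. Given a torsion theory $\cat{T}\subset \cat{A}$ with torsion-free part $\cat{F}=\cat{T}^\perp$, I would associate the $t$-structure
$$\cat{E}_\cat{T} \;=\; \langle \cat{D},\, \cat{T}[-1] \rangle,$$
which one checks unpacks concretely as the subcategory of those $c\in \cat{D}[-1]$ with $H^1(c)\in \cat{T}$ (cohomology taken with respect to $\cat{D}$). Conversely, given $\cat{E}$ with $\cat{D}\subset \cat{E}\subset \cat{D}[-1]$, I would set $\cat{T}_\cat{E} = \cat{A}\cap \cat{E}[1]$. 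Monotonicity of both assignments is immediate from the definitions.

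\textbf{Forward direction.} The inclusion $\cat{E}_\cat{T}[1]\subset \cat{D}\subset \cat{E}_\cat{T}$ is free, so the content is producing a truncation triangle for each $c\in\cat{C}$. Start from the $\cat{D}$-truncation $\tau^{\leq 1}c \to c \to \tau^{\geq 2}c$ and the torsion decomposition $0\to t\to H^1(c)\to f\to 0$ of the top cohomology, and define $e$ as the fibre of the composite $\tau^{\leq 1}c \to H^1(c)[-1]\to f[-1]$. Two applications of the octahedral axiom yield triangles $\tau^{\leq 0}c \to e \to t[-1]$ (so $e\in \cat{E}_\cat{T}$ by construction) and $f[-1]\to e' \to \tau^{\geq 2}c$, where $e'$ is the cone of $e\to c$. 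Showing $e'\in \cat{E}_\cat{T}^\perp$ reduces, after splitting an arbitrary $x\in \cat{E}_\cat{T}$ via its own $\cat{D}$- and $\cat{T}[-1]$-parts, to the two vanishings $\mor{\cat{D}}{\cat{D}^\perp}=0$ and $\mor{\cat{T}}{\cat{F}}=0$, the latter being the defining property of $\cat{F}=\cat{T}^\perp$.

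\textbf{Backward direction.} For $a\in \cat{A}$, apply the $\cat{E}$-truncation to $a[-1]$ to obtain $e\to a[-1]\to e'$ with $e\in \cat{E}$, $e'\in \cat{E}^\perp$. The essential step is a cohomology chase: $\cat{E}\subset \cat{D}[-1]$ forces $H^{\geq 2}(e)=0$, while $\cat{D}\subset \cat{E}$ gives $\cat{E}^\perp\subset \cat{D}^\perp$ and hence $H^{\leq 0}(e')=0$; combined with $H^i(a[-1])=0$ for $i\neq 1$, the long exact cohomology sequence pins both $e$ and $e'$ into $\cat{A}[-1]$. Shifting up by one yields a short exact sequence $0\to t\to a\to f\to 0$ in $\cat{A}$ with $t=e[1]\in \cat{T}_\cat{E}$ by definition, and $f=e'[1]\in \cat{T}_\cat{E}^\perp$ via the orthogonality $\mor{\cat{E}}{\cat{E}^\perp}=0$ applied to $\cat{T}_\cat{E}[-1]\subset \cat{E}$.

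\textbf{Mutual inverses.} The identity $\cat{T}_{\cat{E}_\cat{T}}=\cat{T}$ is tautological from the concrete description of $\cat{E}_\cat{T}$. For $\cat{E}_{\cat{T}_\cat{E}}=\cat{E}$, the nontrivial inclusion uses extension-closure of the aisle $\cat{E}$: for $c\in \cat{D}[-1]$ with $H^1(c)\in \cat{T}_\cat{E}$, the triangle $\tau^{\leq 0}c\to c\to H^1(c)[-1]$ has both endpoints in $\cat{E}$, forcing $c\in \cat{E}$. I expect the cohomology chase in the backward direction to be the main subtle point, since it is the unique place where the full sandwich condition $\cat{D}\subset \cat{E}\subset \cat{D}[-1]$ is used essentially, confining the $\cat{E}$-truncation of $a[-1]$ to $\cat{A}[-1]$.
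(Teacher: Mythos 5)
Your construction is exactly the paper's: the same pair of mutually inverse, order-preserving assignments $\cat{T} \mapsto \langle \cat{D}, \cat{T}[-1]\rangle$ and $\cat{E} \mapsto \cat{A}\cap\cat{E}[1]$ (which agrees with the paper's $(\cat{E}\cap\cat{D}^\perp)[1]$ since $\cat{E}[1]\subset\cat{D}$). The difference is that the paper outsources the two substantive verifications to Happel--Reiten--Smal\o\ (that $\langle\cat{D},\cat{T}[-1]\rangle$ is a $t$-structure) and Beligiannis--Reiten (that $\cat{A}\cap\cat{E}[1]$ is a torsion theory), whereas you prove them directly. Those arguments check out: the two octahedra in the forward direction correctly produce $\tau^{\leq 0}c\to e\to t[-1]$ and $f[-1]\to e'\to \tau^{\geq 2}c$, and the reduction of $e'\in\cat{E}_\cat{T}^\perp$ to the two vanishings $\mor{\cat{D}}{\cat{D}^\perp}=0$ (applied, in various shifts, to $\tau^{\geq 2}c$ and $f[-1]$) and $\mor{\cat{T}}{\cat{F}}=0$ is correct; the cohomology chase in the backward direction confining $e$ and $e'$ to $\cat{A}[-1]$ is also sound. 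What your version buys is a self-contained, reference-free proof in place of two citations.

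One oversight in the mutual-inverses step: you prove $\cat{E}_{\cat{T}_\cat{E}}\subset\cat{E}$ via extension-closure but silently treat the reverse inclusion $\cat{E}\subset\cat{E}_{\cat{T}_\cat{E}}$ as immediate. It is not quite: for $c\in\cat{E}$ one must show $H^1(c)\in\cat{T}_\cat{E}$, i.e.\ $H^1(c)[-1]\in\cat{E}$, and this follows only after rotating the truncation triangle to $c\to H^1(c)[-1]\to(\tau^{\leq 0}c)[1]$ and noting that both $c\in\cat{E}$ and $(\tau^{\leq 0}c)[1]\in\cat{D}[1]\subset\cat{D}\subset\cat{E}$, so extension-closure of $\cat{E}$ applies again. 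This mirrors the argument you did give for the other inclusion, so it is a gap in exposition rather than in the idea, but the claim that only one inclusion is ``nontrivial'' should be corrected.
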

\begin{proof}
Denote the cohomological functors associated to $\cat{D}$ by $H^i$. Given a torsion theory $\cat{T}$ in $\cat{A}$ define a subcategory
$$
\cat{E} = \langle \cat{D}, \cat{T}[-1] \rangle = \langle c \in \cat{C}\ |\ H^ic=0 \ \textrm{for}\  i>1 \ \textrm{and}\ H^1c \in \cat{T}\rangle.
$$
Happel, Reiten and Smal\o\ \cite[Proposition 2.1]{Happel:1996uq} show that this is a $t$-structure. It is clear that $\cat{D} \subset \cat{E} \subset \cat{D}[-1]$. 

Conversely, given a $t$-structure $\cat{E}$ with $\cat{D}\subset \cat{E} \subset \cat{D}[-1]$ we define $$\cat{T}=(\cat{E} \cap \cat{D}^\perp)[1] = \langle a\in\cat{A}\ |\ a=H^1e \ \textrm{for some} \ e\in \cat{E} \rangle.$$ 
Beligiannis and Reiten \cite[Theorem 3.1]{MR2327478} show that this is a torsion theory in $\cat{A}$. These constructions preserve the partial orders and are mutually inverse. 
\end{proof}

Let $\cat{A}$ be the heart of a $t$-structure $\cat{D}$. By the above a torsion theory $\cat{T}$ in $\cat{A}$ determines a new $t$-structure $\langle \cat{D}, \cat{T}[-1] \rangle$: we say this new $t$-structure is obtained from the original by \defn{left tilting at $\cat{T}$} and denote its heart by $L_\cat{T}\cat{A}$. Explicitly
$$
L_\cat{T}\cat{A} = \langle \cat{F} , \cat{T}[-1] \rangle \\
= \{ c \in \cat{C} \ | \ \textrm{$H^{0}c \in \cat{F}, H^1c\in \cat{T}$ and $H^ic=0$ for $i \neq 0,1$} \},
$$
where $H^i$ denotes the $i$th cohomology functor with respect to the initial $t$-structure and $\cat{F}=\cat{T}^\perp$. A torsion theory also determines another $t$-structure 
$$
\langle \cat{D}^\perp, \cat{F} \rangle^\perp = \langle c\in \cat{C}\ |\ H^ic=0 \ \textrm{for}\  i<0 \ \textrm{and}\ H^0c \in \cat{F}\rangle^\perp.
$$
by a `double dual' construction. This is the shift by $[1]$ of the other: we say it is obtained by \defn{right tilting} at $\cat{T}$ and denote the new heart by $R_\cat{T}\cat{A}$.  Intuitively, $L_\cat{T}\cat{A}$ is obtained from $\cat{A}$ by replacing $\cat{T}$ by its shift $\cat{T}[-1]$ and $R_\cat{T}\cat{A}$ is obtained by replacing $\cat{F}$ by $\cat{F}[1]$. Left and right tilting are inverse to one another: $\cat{F}$ is a torsion theory in $L_\cat{T}\cat{A}$ and right tilting with respect to this we recover the original heart $\cat{A}$. 
\begin{figure}[htbp]
\centerline {
\includegraphics[width=3.5in]{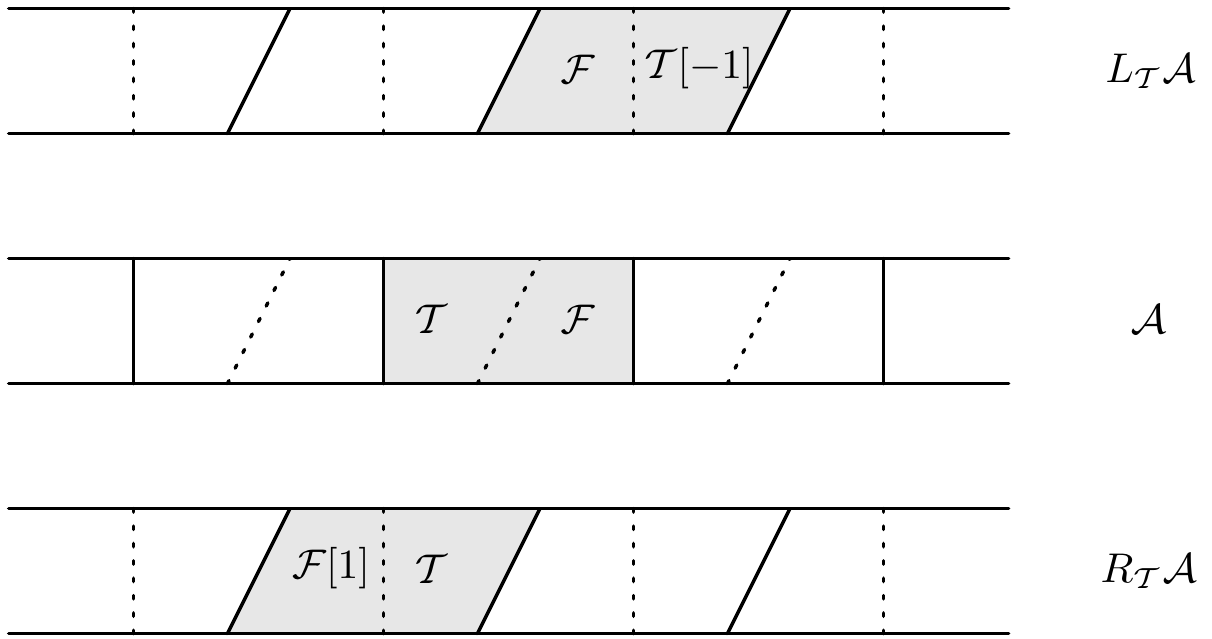}
}
\caption{Schematic diagram of tilting; indicated hearts shown shaded.}
\label{tilting picture}
\end{figure}

 \begin{lemma}
\label{simplicity criterion 1}
Let $\cat{T}$ be a torsion theory in the heart $\cat{A}$ of a $t$-structure. Then any simple object in $L_\cat{T}\cat{A}$ lies either in $\cat{F}=\cat{T}^\perp$ or in $\cat{T}[-1]$ and 
\begin{enumerate}
\item $f\in \cat{F}$ is simple in $L_\cat{T}\cat{A} \iff$ there are no exact triangles
$$
f'\to f\to f'' \to f'[1] \qquad \textrm{or} \qquad t'[-1] \to f\to f'\to t' 
$$
with $f',f''\in \cat{F}$ and $t'\in \cat{T}$ and all non-zero;
\item $t[-1] \in \cat{T}[-1]$ is simple in $L_\cat{T}\cat{A} \iff$ there are no exact triangles
$$
t'\to t\to t'' \to t'[1] \qquad \textrm{or} \qquad t'[-1]\to t[-1] \to f'\to t'
$$
 with $t',t''\in \cat{T}$ and $f'\in \cat{F}$ and all non-zero.
\end{enumerate}
\end{lemma}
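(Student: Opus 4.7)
My plan is to exploit the canonical ``torsion'' short exact sequence inside $L_\cat{T}\cat{A}$. Any $c \in L_\cat{T}\cat{A}$ satisfies $H^0(c) \in \cat{F}$, $H^1(c) \in \cat{T}$ and $H^i(c) = 0$ otherwise (where $H^i$ denotes the cohomology of the original $t$-structure), so its truncation triangle reads
$$H^1(c)[-1] \to c \to H^0(c) \to H^1(c),$$
and both outer terms belong to $L_\cat{T}\cat{A}$ since $\cat{T}[-1], \cat{F} \subset L_\cat{T}\cat{A}$. This is therefore a short exact sequence in $L_\cat{T}\cat{A}$, and if $c$ is simple then one end must vanish: either $H^1(c)=0$ and $c \in \cat{F}$, or $H^0(c)=0$ and $c \in \cat{T}[-1]$. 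This settles the opening dichotomy.

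For the ``only if'' direction of both (1) and (2) the observation is that each stated forbidden triangle has all three terms in $\cat{F} \cup \cat{T}[-1] \subset L_\cat{T}\cat{A}$, so it \emph{is} a short exact sequence in $L_\cat{T}\cat{A}$; since the two ends are non-zero it realises the middle as a proper extension, contradicting simplicity.

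The substance of the argument is the converse of (1). Assuming $f \in \cat{F}$ is not simple, choose a proper short exact sequence $0 \to a \to f \to b \to 0$ in $L_\cat{T}\cat{A}$. Applying the long exact sequence of the $H^i$ and using $H^i f = 0$ for $i\neq 0$ pins down $H^1 b = 0$ (so $b \in \cat{F}$) and yields the four-term sequence
$$0 \to H^0 a \to f \to H^0 b \to H^1 a \to 0$$
in $\cat{A}$. I then split on $H^1 a$. If $H^1 a = 0$, then $a \in \cat{F}$ and the original triangle is of the first forbidden form. If $H^0 a = 0$, then $a = H^1 a[-1] \in \cat{T}[-1]$ and the triangle is of the second forbidden form. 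In the mixed case, I replace $a$ by its free part: $H^0 a \hookrightarrow f$ is mono in $\cat{A}$, its cokernel $f/H^0 a$ embeds into $H^0 b \in \cat{F}$ and so lies in $\cat{F}$ (here I use only that $\cat{F}$ is closed under subobjects). Hence $0 \to H^0 a \to f \to f/H^0 a \to 0$ is a short exact sequence of the first forbidden form, with $f/H^0 a$ non-zero because $f/H^0 a = 0$ would force $H^0 b \cong H^1 a \in \cat{F} \cap \cat{T} = 0$.

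Part (2) runs in exact parallel: for a proper short exact sequence $0 \to a \to t[-1] \to b \to 0$ the cohomology LES gives $a \in \cat{T}[-1]$ together with the four-term sequence $0 \to H^0 b \to H^1 a \to t \to H^1 b \to 0$ in $\cat{A}$, and one splits on whether $H^0 b$ vanishes. Vanishing gives the first forbidden triangle directly. In the non-vanishing case I form the kernel $a'$ of the composite $t[-1] \twoheadrightarrow b \twoheadrightarrow H^0 b$; its own LES shows $a' = t'[-1]$ for some $t' \in \cat{T}$ fitting into $0 \to H^0 b \to t' \to t \to 0$, and the triangle $a' \to t[-1] \to H^0 b \to a'[1]$ is then of the second forbidden form (with $t' \neq 0$ since $H^0 b \neq 0$). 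The only point requiring care throughout is this mixed case; the rest is bookkeeping with the cohomology LES and the closure properties of $\cat{T}$ and $\cat{F}$ recorded after Proposition~\ref{HRS}.
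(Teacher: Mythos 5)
Your overall strategy—use the cohomology long exact sequence of the original $t$-structure and split into cases according to whether $H^0$ or $H^1$ of the terms vanish—is sound, and differs from the paper's, which works directly with the torsion pair $(\cat{F},\cat{T}[-1])$ in $L_\cat{T}\cat{A}$ and "enlarges" the subobject. Parts of your argument go through, but there is a genuine error that traces back to a consistent confusion about which of $\cat{F}$ and $\cat{T}[-1]$ is the torsion class and which is the torsion-free class in $L_\cat{T}\cat{A}$.

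The issue already shows in your opening triangle. For $c\in L_\cat{T}\cat{A}$ the truncation triangle of the original $t$-structure is
$$\tau^{\le0}c \to c \to \tau^{\ge1}c, \qquad\text{i.e.}\qquad H^0(c)\to c\to H^1(c)[-1]\to H^0(c)[1],$$
so that $H^0(c)\in\cat{F}$ is the \emph{subobject} and $H^1(c)[-1]\in\cat{T}[-1]$ is the \emph{quotient}; you wrote the arrows in the opposite direction. This is harmless for the dichotomy (one end still has to vanish when $c$ is simple), and the necessity argument and all of part (1) are correct as you wrote them, including the mixed case, because there you use the genuine map $f\to H^0b$ from the four-term sequence and the closure of $\cat{F}$ under subobjects. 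The problem bites in the mixed case of part (2). You form ``the kernel of the composite $t[-1]\twoheadrightarrow b\twoheadrightarrow H^0b$''. But there is no epimorphism $b\to H^0b$: in $L_\cat{T}\cat{A}$ the object $H^0b$ is the torsion subobject of $b$ (it is $\tau^{\le0}b$, with the canonical map going $H^0b\hookrightarrow b$), not a quotient. So when $H^0b\ne 0$ \emph{and} $H^1b\ne0$ your construction does not produce a map at all, and the claimed short exact sequence $0\to t'[-1]\to t[-1]\to H^0b\to0$ has no reason to exist. (When $H^1b=0$ you have $b=H^0b$ and your composite is just $t[-1]\twoheadrightarrow b$, which is fine—so the gap is precisely the genuinely mixed case.)

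The repair is to compose with the surjection onto the free quotient instead: $t[-1]\twoheadrightarrow b\twoheadrightarrow H^1(b)[-1]$. Its kernel $a'$ contains $a$ (hence is non-zero) and, being a subobject of $t[-1]$ inside the free class $\cat{T}[-1]$, lies in $\cat{T}[-1]$, say $a'=t'[-1]$. Shifting the resulting short exact sequence $0\to t'[-1]\to t[-1]\to H^1(b)[-1]\to0$ gives a non-trivial triangle $t'\to t\to H^1(b)\to t'[1]$ with all three terms in $\cat{T}$, which is the \emph{first} forbidden form in (2), not the second. This is in effect what the paper does when it ``enlarges $t'$''. So your proof has a real gap in the mixed case of (2), though the fix stays within your framework; note also that the case split in the fixed argument should be on $H^1b$ rather than $H^0b$.
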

\begin{proof}
Since $\cat{F}$ is a torsion theory in $L_\cat{T}\cat{A}$ with $\cat{F}^\perp = \cat{T}[-1]$ every object $b\in L_\cat{T}\cat{A}$ fits into a short exact sequence $0\to f \to b \to t[-1] \to 0$, and therefore can only be simple if it lies in either $\cat{F}$ or $\cat{T}[-1]$. The necessity of the conditions is evident. We  prove sufficiency only in the second case, the argument for the first case is similar. 

Suppose $t[-1]\in T[-1]$  is not simple in $L_\cat{T}\cat{A}$. Then there must be a proper subobject, and since $\cat{T}[-1]$ is the free part of a torsion theory in $L_\cat{T}\cat{A}$ this subobject must also be in $\cat{T}[-1]$, \ie we have a short exact sequence
$$
0\to t'[-1]\to t[-1] \to b\to 0
$$
 with $t'\in \cat{T}$ and $b\in L_\cat{T}\cat{A}$. There is a short exact sequence $0\to f \to b \to t''[-1]\to 0$ with $f\in \cat{F}$ and $t''\in\cat{T}$. If $t''\neq0$ then $t''$ is a proper quotient of $t$ and we can change our choice of $t'$ (by enlarging it) to obtain a short exact sequence
$$
0\to t'[-1]\to t[-1] \to t''[-1]\to 0
$$
in $L_\cat{T}\cat{A}$. On the other hand, if $t''=0$ then $b=f$ and we obtain a short exact sequence 
$$
0 \to t'[-1]\to t[-1]\to f\to 0
$$
in $L_\cat{T}\cat{A}$. These sequences yield the claimed exact triangles. 
\end{proof}
We leave the reader to formulate and prove the analogue for right tilts.

\subsection{Simple tilts}
\label{tilting at simples}

We will be particularly interested in the smallest changes that can be made in $t$-structures by tilting. 
Assume that the heart $\cat{A}$ has finite length and finitely many simple objects. To spell this out, $\cat{A}$ is both artinian and noetherian so that every object has a finite composition series. By the Jordan-H\"older theorem, the graded object associated to such a composition series is unique up to isomorphism. The simple objects form a basis for the Grothendieck group, which is isomorphic to $\Z^n$, where $n$ is the number of simple objects. 

Under this assumption each simple object $s\in \cat{A}$ determines two torsion theories, namely $\langle s \rangle$ and ${}^\perp\langle s \rangle$. These are respectively minimal and maximal (in the sense of the poset of torsion theories) non-trivial torsion theories in $\cat{A}$. To harmonise our notation with \cite{Bridgeland:fk} we use the shorthand $L_s = L_{\langle s \rangle}$ and $R_s = R_{{}^\perp\langle s \rangle}$. We will refer to $L_s\cat{A}$ and $R_s\cat{A}$ as {\em simple tilts} of $\cat{A}$.

For the remainder of this section we make the following stronger assumption.
\begin{assumption}
\label{assumption1}
The heart $\cat{A}$ and all hearts obtained from it by finite sequences of simple tilts have finite length and finitely many simple objects. (Note that they all have the same number of simple objects, since the classes of these form a basis for the Grothendieck group.)
\end{assumption}

\begin{lemma}
\label{simplicity criterion 2}
Suppose $s\in\cat{A}$ is simple. For each simple $a\neq s$ in $\cat{A}$ there are unique, up to isomorphism, simples $L_sa\in L_s\cat{A}$ and $R_sa\in R_s\cat{A}$ fitting into exact triangles $t[-1]\to a\to L_sa \to t$ with $t\in \langle s \rangle$ and $f\to R_sa\to a \to f[1]$ with $f\in \langle s \rangle$ respectively. Each simple in $L_s\cat{A}$, respectively $R_s\cat{A}$, is either $s[-1]$, respectively $s[1]$, or of the form $L_sa$, respectively $R_sa$, for a unique simple $a$ in $\cat{A}$. Moreover, $R_{s[-1]}L_sa\cong a$ and $L_sR_{s[-1]}b\cong b$.
\end{lemma}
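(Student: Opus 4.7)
First note that a simple $a \neq s$ lies automatically in $\cat{F} = \cat{T}^\perp$: any nonzero morphism between two simples is an isomorphism, so $\mor{s}{a} = 0$. The plan is to construct $L_s a$ as the maximal extension of $a$ by an object of $\cat{T}$ remaining in $\cat{F}$; the $R_s$ case is entirely dual.

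Consider the poset of short exact sequences $0 \to a \to f \to t \to 0$ in $\cat{A}$ with $f \in \cat{F}$ and $t \in \cat{T}$, ordered by refinement. By Proposition~\ref{HRS}, these correspond to subobjects $t[-1] \hookrightarrow a$ of $a$ in $L_s\cat{A}$ lying in $\cat{T}[-1]$. Since $\cat{T}[-1]$ is closed under extensions in $L_s\cat{A}$ and $L_s\cat{A}$ has finite length by Assumption~\ref{assumption1}, there is a unique maximal such subobject $t_{\max}[-1]$. Define $L_s a \in \cat{F}$ by the SES $0 \to a \to L_s a \to t_{\max} \to 0$, which yields the required triangle with $t = t_{\max} \in \langle s \rangle$.

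To show $L_s a$ is simple in $L_s\cat{A}$ I apply Lemma~\ref{simplicity criterion 1}. For condition~(i), a proper subobject $f' \subsetneq L_s a$ in $\cat{A}$ with both $f'$ and $L_s a / f'$ in $\cat{F}$ meets the simple $a \subset L_s a$ in $0$ or $a$: in the first case $f' \hookrightarrow t_{\max}$ forces $f' \in \cat{T} \cap \cat{F} = 0$, and in the second $L_s a / f'$ is a quotient of $t_{\max}$ and similarly vanishes. For condition~(ii), any SES $0 \to L_s a \to f' \to t' \to 0$ in $\cat{A}$ with $f' \in \cat{F}$ and $t' \in \cat{T}$ nonzero would compose with the defining SES of $L_s a$ to give a strictly larger extension of $a$ by $\cat{T}$ in $\cat{F}$, contradicting maximality. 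Uniqueness of $L_s a$ follows the same way: any simple $b$ fitting the stated triangle lies in $\cat{A}\cap L_s\cat{A}=\cat{F}$ and is itself a maximal element of this poset, so $b \cong L_s a$.

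For the classification, Lemma~\ref{simplicity criterion 1} asserts that every simple of $L_s\cat{A}$ lies in $\cat{F}$ or $\cat{T}[-1]$; as $s$ is the only simple in $\cat{T}$, only $s[-1]$ is simple in $\cat{T}[-1]$. Given a simple $f \in \cat{F}$, any simple $\cat{A}$-subobject $a \hookrightarrow f$ has $a \neq s$ since $\mor{s}{f} = 0$; condition~(i) forces $f/a \in \cat{T}$ (else the maximal torsion subobject of $f/a$ pulls back to a forbidden SES), and condition~(ii) makes $f$ the maximal such extension, so $f \cong L_s a$; here $a$ is unique because a second simple $\cat{A}$-subobject would embed into $t_{\max} \in \cat{T}$ and thus be $s$. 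Finally, applying the $R_s$ half of the lemma inside $L_s\cat{A}$ (which satisfies Assumption~\ref{assumption1} as a simple tilt of $\cat{A}$) at its simple $s[-1]$, and observing that $\langle s[-1]\rangle = \cat{T}[-1]$ inside $L_s\cat{A}$, the characterizing triangle for $R_{s[-1]}(L_s a)$ is exactly our triangle $t[-1] \to a \to L_s a \to t$, so $R_{s[-1]} L_s a \cong a$; the identity $L_s R_{s[-1]} b \cong b$ is symmetric. The main obstacle is establishing existence of the maximal extension in the construction step, which is precisely where the finite length of $L_s\cat{A}$ from Assumption~\ref{assumption1} is essential.
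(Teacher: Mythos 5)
Your proof is correct in substance but takes a genuinely different route from the paper. The paper iterates: it takes a simple $a \in \langle s\rangle^\perp$, quotients by $s[-1]$ inside $L_s\cat{A}$ repeatedly until no further $s[-1]$ is a subobject (termination by finite length), then verifies simplicity via composition factors, gets uniqueness via the Grothendieck-group class $[a] + r[s]$, and obtains the classification of simples in $L_s\cat{A}$ by a counting argument. You construct $L_s a$ in one step as a maximal extension of $a$ by $\cat{T}$ inside $\cat{F}$, and extract uniqueness and the classification directly from maximality and Lemma~\ref{simplicity criterion 1}, with no appeal to the Grothendieck group or counting. Your approach is more explicit and arguably more streamlined.

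One justification needs tightening: you assert that a unique maximal subobject $t_{\max}[-1]$ of $a$ in $\cat{T}[-1]$ exists ``since $\cat{T}[-1]$ is closed under extensions in $L_s\cat{A}$ and $L_s\cat{A}$ has finite length.'' Closure under extensions alone does not suffice --- given two such subobjects $t_1[-1], t_2[-1] \subset a$, their sum is a \emph{quotient} of $t_1[-1]\oplus t_2[-1]$, and the free part of a torsion theory is in general not closed under quotients, so the sum need not lie in $\cat{T}[-1]$. What saves the argument here is that $\cat{T}[-1] = \langle s[-1]\rangle$ is the extension closure of the \emph{simple} object $s[-1]$ in the finite length category $L_s\cat{A}$; that makes it a Serre subcategory, hence also closed under quotients, and the unique maximal such subobject exists. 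With this fixed (and noting that the citation of Proposition~\ref{HRS} for the rotation of the triangle is stronger than necessary), the proof goes through.
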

\begin{proof}
The cases are similar so we deal only with the left tilt. It follows immediately from Lemma~\ref{simplicity criterion 1} that $s[-1]$ is simple in $L_s\cat{A}$, and that every other simple must lie in $\langle s \rangle^\perp$. 

Suppose that $a\in \langle s \rangle^\perp$ is simple in $\cat{A}$. If $a$ is {\em not} simple in $L_s\cat{A}$ then by Lemma~\ref{simplicity criterion 1} it has $s[-1]$ as a subobject. Quotienting by this, and repeating if necessary, we obtain $b\in L_s\cat{A}$ with $\mor{s[-1]}{b}=0$. We use the assumption that $L_s\cat{A}$ has finite length to show this process of quotienting must terminate.  Since $\langle s \rangle^\perp$ is a torsion theory in $L_s\cat{A}$ the quotient $b$ of $a$ is in $\langle s \rangle^\perp$ too, in particular $b\in \cat{A}$. By construction, the composition factors of $b$ in $\cat{A}$ are $a$ and a number (possibly zero) of copies of $s$. It follows that there is no exact triangle
$$
f' \to b\to f'' \to f'[1]
$$
with $f',f'' \in \langle s \rangle^\perp$ and both non-zero.  Therefore, by Lemma~\ref{simplicity criterion 1}, the object $b$ is simple in $L_s\cat{A}$. The class of $b$ in the Grothendieck group  is  $[a]+r[s]$ for some $r\geq 0$. It follows that there is a unique such simple quotient (otherwise the simples in  $L_s\cat{A}$ would not form a basis). Set  $L_sa=b$. Considering classes in the Grothendieck group again, we see that $a\neq a'$ implies $L_sa\neq L_sa'$. Thus, by counting, each simple in $L_s\cat{A}$ other than $s[-1]$ is of the form $L_sa$ for some simple $a\in \cat{A}$.  

The last assertion follows from noting that $R_{s[-1]}L_sa$ is the unique simple in $\cat{A}$ fitting into a triangle
$$
a' [-1]\to R_{s[-1]}L_sa\to L_sa \to a'
$$
with $a' \in \langle s\rangle$. Since $a$ also fits into the second term of this triangle, uniqueness tells us that $R_{s[-1]}L_sa\cong a$. The proof that $L_{s}R_{s[-1]}b\cong b$ is similar.
 \end{proof}
\begin{remark}
If we define $L_ss=s[-1]$ and $R_ss=s[1]$ then we obtain natural bijections between the simples in $\cat{A}$ and those in $L_s\cat{A}$ and $R_s\cat{A}$ respectively. There are no such natural bijections  for more general tilts. In the example we compute in \S\ref{constructible} it is possible to perform a sequence of  simple tilts beginning and ending at the same heart for which the composite of the above natural bijections is not the identity.
\end{remark}

The next lemma shows that tilting left by a torsion theory containing only finitely many indecomposables is equivalent to performing a sequence of simple left tilts. There is an obvious analogue for right tilts.
\begin{lemma}
\label{factorisation}
Suppose $a_0,\ldots,a_{N-1}$ is a finite sequence of objects in $\cat{A}$ such that $a_i\in \cat{A}$ is simple in $L_{a_{i-1}}\ldots L_{a_0}\cat{A}$. Then $L_{a_{N-1}}\ldots L_{a_0}\cat{A}=L_\cat{T}\cat{A}$ where $\cat{T} = \langle a_0,\ldots,a_{N-1}\rangle$. Furthermore, any left tilt at a torsion theory containing finitely many (isomorphism classes of) indecomposables is obtained in this way as a composition of simple left tilts.
\end{lemma}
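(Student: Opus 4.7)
The plan is to prove both assertions by induction: on $N$ for the first, on the number of indecomposable isomorphism classes of $\cat{T}$ for the second. Throughout I exploit the $t$-structure description $L_\cat{T}\cat{A}\leftrightarrow\cat{D}_\cat{T}:=\langle\cat{D},\cat{T}[-1]\rangle$ from Proposition~\ref{HRS}, where $\cat{D}$ is the $t$-structure with heart $\cat{A}$.

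For the first assertion, the base case $N=1$ is immediate. For the inductive step, set $\cat{T}'=\langle a_0,\dots,a_{N-2}\rangle$ so that by induction $L_{a_{N-2}}\cdots L_{a_0}\cat{A}=L_{\cat{T}'}\cat{A}$. Applying $L_{a_{N-1}}$ and translating to $t$-structures, it suffices to prove
$$
\bigl\langle\cat{D}_{\cat{T}'},\,\langle a_{N-1}\rangle_{L_{\cat{T}'}\cat{A}}[-1]\bigr\rangle=\cat{D}_\cat{T},
$$
where $\langle a_{N-1}\rangle_{L_{\cat{T}'}\cat{A}}$ denotes the ext-closure computed inside $L_{\cat{T}'}\cat{A}$. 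The key observation is that this ext-closure agrees with $\langle a_{N-1}\rangle$ in $\cat{C}$: by Lemma~\ref{simplicity criterion 1}, $a_{N-1}\in\cat{A}$ lies in the torsion part $\cat{F}'=(\cat{T}')^\perp\cap\cat{A}$ of the induced torsion pair on $L_{\cat{T}'}\cat{A}$; extensions of $a_{N-1}$ by $a_{N-1}$ in $L_{\cat{T}'}\cat{A}$ are classified by $\mor{a_{N-1}}{a_{N-1}[1]}$, which equally classifies extensions in $\cat{A}$; and $\cat{F}'$ is closed under extensions in $L_{\cat{T}'}\cat{A}$, so every iterated extension remains in $\cat{F}'\subseteq\cat{A}$. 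Consequently the left-hand side equals $\langle\cat{D},\cat{T}'[-1],a_{N-1}[-1]\rangle=\langle\cat{D},\cat{T}[-1]\rangle=\cat{D}_\cat{T}$, using that $\langle\cat{T}',a_{N-1}\rangle=\cat{T}$ in $\cat{A}$.

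For the second assertion, if $\cat{T}=0$ the empty sequence works. Otherwise, any non-zero $t\in\cat{T}$ admits a simple quotient $s$ in $\cat{A}$ (by finite length of $\cat{A}$), and $s\in\cat{T}$ because $\cat{T}$ is closed under quotients; set $a_0=s$. The inclusions $\langle s\rangle\subseteq\cat{T}\subseteq\cat{A}$ give $\cat{D}_s\subseteq\cat{D}_\cat{T}\subseteq\cat{D}_s[-1]$, so Proposition~\ref{HRS} applied to $L_s\cat{A}$ furnishes a torsion theory $\cat{T}'$ in $L_s\cat{A}$ with $L_{\cat{T}'}L_s\cat{A}=L_\cat{T}\cat{A}$. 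I then show $\cat{T}'$ has strictly fewer indecomposables than $\cat{T}$: first, $s[-1]\notin\cat{T}'$, for if it were then the further tilt would produce $s[-2]\in L_\cat{T}\cat{A}$, contradicting that objects of $L_\cat{T}\cat{A}$ have cohomology only in degrees $0$ and $1$; second, the remaining indecomposables of $\cat{T}'$ are indexed by indecomposables of $\cat{T}$ other than $s$ via a correspondence analogous to the $a\mapsto L_s a$ identification of Lemma~\ref{simplicity criterion 2}. Applying the inductive hypothesis inside $L_s\cat{A}$ (which also satisfies Assumption~\ref{assumption1}) yields $a_1,\dots,a_{N-1}$ simple in the successive tilts; the requirement $a_i\in\cat{A}$ follows from the observation (obtainable by taking $\cat{A}$-cohomology of the triangle $t[-1]\to a\to L_s a\to t$ in Lemma~\ref{simplicity criterion 2}) that every simple of $L_s\cat{A}$ other than $s[-1]$ is of the form $L_s a$ and in fact lies in $\cat{A}$.

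The main obstacle is the bookkeeping in the inductive decrease of the second assertion---pinning down the correspondence between the indecomposables of $\cat{T}'$ in $L_s\cat{A}$ and those of $\cat{T}$ in $\cat{A}$, and confirming both finiteness (which needs the stronger finite-representation-type hypothesis from the introduction to guarantee $\cat{T}'$ itself has only finitely many indecomposables) and strict decrease. By contrast the first assertion reduces essentially to a symbolic manipulation of ext-closures, once the point about extensions in $L_{\cat{T}'}\cat{A}$ versus in $\cat{A}$ is settled.
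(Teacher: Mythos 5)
Your proof of the first assertion is correct and follows the same route as the paper, just rephrased as an induction and with more care about where the ext-closures live; the paper simply unwinds the nested brackets $\cat{D}_{\cat{A}_N} = \langle\cat{D}_{\cat{A}_{N-1}},a_{N-1}[-1]\rangle = \cdots = \langle\cat{D}_\cat{A},\cat{T}[-1]\rangle$ in one line, relying on the fact that all these ext-closures are taken in $\cat{C}$, so the order of accumulation of generators is irrelevant. Your separate check that $\langle a_{N-1}\rangle_{L_{\cat{T}'}\cat{A}}=\langle a_{N-1}\rangle_\cat{A}$ is valid but not strictly needed for the $t$-structure computation.

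The second assertion is where you have a genuine gap, which you partly acknowledge. Your plan is right in outline --- choose a simple $s\in\cat{T}$ of $\cat{A}$, pass to $L_s\cat{A}$, obtain via Proposition~\ref{HRS} a residual torsion theory $\cat{T}'$ in $L_s\cat{A}$ with $L_{\cat{T}'}L_s\cat{A}=L_\cat{T}\cat{A}$, and induct on the number of indecomposables --- but the decisive observation you are missing is that $\cat{T}'$ can be identified \emph{concretely} as $\cat{T}'=\cat{T}\cap\langle s\rangle^\perp$, which the paper proves by exhibiting, for each $b\in L_s\cat{A}$, a short exact sequence with kernel in $\cat{T}\cap\langle s\rangle^\perp$ and cokernel in $\langle\cat{T}^\perp,s[-1]\rangle$. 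This identification is what makes the induction run smoothly: $\cat{T}'$ is literally a full subcategory of $\cat{T}$ (even though it is a torsion theory in a \emph{different} heart), so its indecomposables are a subset of those of $\cat{T}$, and it loses $s$; there is no need for your proposed ``correspondence analogous to $a\mapsto L_sa$,'' which you could not pin down and which would in any case be the wrong tool ($L_s$ matches up simples, not indecomposables, and it need not act as a bijection on indecomposables of a torsion class). Your observation that $s[-1]\notin\cat{T}'$ is a correct step in the right direction, but by itself it does not give $\cat{T}'\subset\cat{T}$ without a further argument that you do not supply. Two further consequences of the subcategory inclusion $\cat{T}'\subset\cat{T}\subset\cat{A}$ which your account does not correctly handle: it ensures that the new simple $a_1$ can be chosen inside $\cat{T}'$, hence inside $\cat{A}$, and similarly at every subsequent step --- whereas your justification for $a_i\in\cat{A}$ invokes the triangle of Lemma~\ref{simplicity criterion 2} for simples of $L_s\cat{A}$ and so only addresses $i=1$, not simples of the later tilted hearts; and it makes the finiteness of indecomposables of $\cat{T}'$ automatic from that of $\cat{T}$, so your worry about needing the stronger Assumption~\ref{assumption2} is unfounded (the paper proves this lemma under Assumption~\ref{assumption1} alone).
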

\begin{proof}
Write  $\cat{A}_0$ for $\cat{A}$ and $\cat{A}_i$ for $L_{a_{i-1}}\ldots L_{a_0}\cat{A}$. Denote the $t$-structure with heart $\cat{A}$ by $\cat{D}_\cat{A}$ and so on. By Proposition~\ref{HRS}
$$
\cat{D}_{\cat{A}_N} = \langle \cat{D}_{\cat{A}_{N-1}} , a_{N-1}[-1] \rangle= \cdots
= \langle \cat{D}_{\cat{A}_0}, a_0[-1],\ldots,a_{N-1}[-1] \rangle = \langle \cat{D}_\cat{A} , \cat{T}[-1] \rangle
$$
where $\cat{T} = \langle a_0,\ldots,a_{N-1}\rangle$. Hence $\cat{A}_N=L_{a_{N-1}}\ldots L_{a_0}\cat{A}=L_\cat{T}\cat{A}$ as claimed. 

Now suppose $\cat{T}=\cat{T}_0$ is a (non-zero) torsion theory in $\cat{A}_0$ containing only finitely many indecomposables. Since $\cat{T}_0$ is closed under factors it must contain at least one simple object, $a_0$ say. We claim that $\cat{T}_1 = \cat{T}_0 \cap \langle a_0\rangle^\perp$ is a torsion theory in $\cat{A}_1$ with corresponding free theory $\cat{F}_1 = \langle \cat{F}_0, a_0[-1]\rangle$ where $\cat{F}_0=\cat{T}_0^\perp$. To see this, consider $b\in \cat{A}_1$. There is a short exact sequence
$$
0\to b' \to b \to b'' \to 0
$$
in $\cat{A}_1$ with $b' \in \langle a_0\rangle^\perp$ and $b'' \in \langle a_0[-1]\rangle$. Since $ \langle a_0\rangle^\perp \subset \cat{A}_0$  there is then a short exact sequence
$$
0 \to t \to b' \to f \to 0
$$
in $\cat{A}_0$ with $t\in \cat{T}_0$ and $f\in \cat{F}_0$. Furthermore, $t$ must be in $\langle a_0\rangle^\perp$ as $b'$ is, so that $t\in \cat{T}_1$. The three objects in the second short exact sequence all lie in $\cat{A}_1$ so that it is, in fact, short exact in $\cat{A}_1$ too. Hence $t$ is a subobject of $b$ in $\cat{A}_1$ and the quotient is an extension of $b''$ by $f$ and so lies in $\cat{F}_1$. Since $\cat{F}_1 \subset \cat{T}_1^\perp$ the claim follows. 

Note that $\cat{T}_1 \subset \cat{T}_0$ (although we do not claim that $\cat{T}_1$ is a torsion theory in $\cat{A}_0$).  Repeating the argument we obtain a nested sequence of full subcategories in $\cat{T}_0$. Since $\cat{T}_0$ has only finitely many indecomposables, and at each stage we remove at least one indecomposable, namely the simple $a_i \in \cat{A}_i$, we must eventually reach $\cat{T}_N=0$. 

By the first part $\langle a_0,\ldots,a_{N-1} \rangle$ is a torsion theory in $\cat{A}_0$ with 
$$
\cat{A}_N=L_{a_{N-1}}\ldots L_{a_0}\cat{A}_0=L_{\langle a_0,\ldots,a_{N-1}\rangle}\cat{A}_0.
$$
Clearly $\langle a_0,\ldots,a_{N-1}\rangle \subset \cat{T}_0$. In fact they must be equal, so that $\cat{A}_N=L_{\cat{T}_0}\cat{A}_0=L_\cat{T}\cat{A}$ as required. If not choose $b\in \cat{T}_0$ but not in $\langle a_0,\ldots,a_{N-1}\rangle$. We may assume that $b \in \langle a_0,\ldots,a_{N-1}\rangle ^\perp$. If it is not then there is a monomorphism $a \rightarrowtail b$ for some $a\in \langle a_0,\ldots,a_{N-1}\rangle $ and we may replace $b$ by the cokernel, which must also lie in $\cat{T}_0$ but not $\langle a_0,\ldots,a_{N-1}\rangle$. Iterating, and using the fact that $\cat{A}_0$ has finite length to ensure termination, we may indeed assume $b \in \langle a_0.\ldots,a_{N-1}\rangle^\perp$. We may also assume $b$ is indecomposable; if not replace it with a summand, at least one of which cannot be in $\langle a_0,\ldots,a_{N-1}\rangle$. Hence there is an indecomposable
$$
b \in \cat{T}_0 \cap \langle a_0,\ldots,a_{N-1}\rangle^\perp = \cat{T}_N
$$
which contradicts $\cat{T}_N=0$.
\end{proof}

\begin{remark}
The actions of automorphisms of $\cat{C}$ commute with simple tilts. More precisely, if $\alpha: \cat{C} \to \cat{C}$ is an automorphism, \ie a triangulated equivalence, then 
$$
\alpha( L_s\cat{A} ) = L_{\alpha(s)}\alpha(\cat{A}) \quad \textrm{and} \quad \alpha( R_s\cat{A} ) = R_{\alpha(s)}\alpha(\cat{A}),
$$
see \cite[Lemma 8.2]{MR2373143}. In the example in \S\ref{constructible} the triangulated category $\cat{C}$ will be equipped with a duality, a triangulated anti-equivalence $D: \cat{C}^{op}\to \cat{C}$ such that $D^2\cong \id$. Thus every $t$-structure will have an associated dual $t$-structure, given a torsion theory $\cat{T}$  in the heart $\cat{A}$ of a $t$-structure there will be a dual torsion theory $D\cat{T}^\perp$ in the heart $D\cat{A}$ of this dual $t$-structure and left tilting $\cat{A}$ with respect to $\cat{T}$ is dual to right tilting $D\cat{A}$ with respect to $D\cat{T}^\perp$. In particular if $s$ is a simple object of $\cat{A}$ then $D(L_s\cat{A}) = R_{Ds}D\cat{A}$.
\end{remark}

\subsection{Stability conditions}
\label{stability conditions}

Let $\cat{C}$ be a triangulated category and $K(\cat{C})$ be its Grothendieck group. A \defn{stability condition} $(\mathcal{Z},\mathcal{P})$ on $\cat{C}$ \cite[Definition 1.1]{MR2373143} consists of a group homomorphism $\mathcal{Z} : K(\cat{C}) \to \C$ and full additive subcategories $\mathcal{P}(\varphi)$ of $\cat{C}$ for each $\varphi\in \R$ satisfying
\begin{enumerate}
\item if $c\in \mathcal{P}(\varphi)$ then $\mathcal{Z}(c) = m(c)\exp(i\pi \varphi)$ where $m(c) \in \R_{>0}$;
\item $\mathcal{P}(\varphi+1) = \mathcal{P}(\varphi)[1]$ for each $\varphi\in \R$;
\item if $c\in \mathcal{P}(\varphi)$ and $c' \in \mathcal{P}(\varphi')$ with $\varphi > \varphi'$ then $\mor{c}{c'}=0$;
\item\label{HN filtration} for each nonzero object $c\in \cat{C}$ there is a finite collection of triangles
$$
\xymatrix{
%0 \ar@{=}[r] & 
0=c_0 \ar[rr] && c_1 \ar[r] \ar[dl] & \cdots \ar[r] & c_{n-1} \ar[rr] && c_n=c \ar[dl]\\
%\ar@{=}[r] \ar[dl] & a \\
& b_1\ar@{-->}[ul] &&&& b_n \ar@{-->}[ul]
}
$$
with $b_j \in \mathcal{P}(\varphi_j)$ where $\varphi_1 > \cdots > \varphi_n$.
\end{enumerate}
The homomorphism $\mathcal{Z}$ is known as the \defn{central charge} and the objects of $\mathcal{P}(\varphi)$ are said to be \defn{semi-stable of phase $\varphi$}. The objects $b_j$ are known as the \defn{semi-stable factors} of $c$. We define  $\varphi^+(c)=\varphi_1$ and $\varphi^-(c)=\varphi_n$. The \defn{mass}  of $c$ is defined to be $m(c)=\sum_{i=1}^n m(b_i)$. 

A stability condition $(\mathcal{Z},\mathcal{P})$ determines a bounded $t$-structure on $\cat{C}$ with heart the extension-closed subcategory generated by semi-stables with phases in $(0,1]$. Conversely, if we are given a bounded $t$-structure on $\cat{C}$ together with a stability function on the heart with the Harder--Narasimhan property --- the abelian analogue of property~\ref{HN filtration} above --- then this determines a stability condition on $\cat{C}$ \cite[Proposition 5.3]{MR2373143}.

We say a stability condition is \defn{locally-finite} if we can find $\epsilon >0$ such that the quasi-abelian category $\mathcal{P}(t-\epsilon, t+\epsilon)$, generated by semi-stables with phases in $(t-\epsilon, t+\epsilon)$, has finite length (see \cite[Definition 5.7]{MR2373143}). The set of locally-finite stability conditions can be topologised so that it is a, possibly infinite-dimensional, complex manifold, which we denote $\stab{\cat{C}}$ \cite[Theorem 1.2]{MR2373143}. There is a linear subspace $V \subset \mor{K(\cat{C})}{\C}$ such that the projection $(\mathcal{Z}, \mathcal{P}) \mapsto \mathcal{Z}$ is a local homeomorphism on $V$. The slogan is that deformations of the central charge lift to deformations of the stability condition. The topology arises from the (generalised) metric
$$
d(\sigma,\tau) = \sup_{0\neq c \in \cat{C}} \max\left( | \varphi_\sigma^-(c) - \varphi_\tau^-(c)| , | \varphi_\sigma^+(c) - \varphi_\tau^+(c)|, \left| \log \frac{m_\sigma(c)}{m_\tau(c)}\right| \right) 
$$
which takes values in $[0,\infty]$.

The group ${\rm Aut}(\cat{C})$ of automorphisms acts continuously on the space $\stab{\cat{C}}$ of stability conditions with an automorphism $\alpha$ acting by
$$
(\mathcal{Z}, \mathcal{P}) \mapsto \left(\mathcal{Z}\circ \alpha^{-1}, \alpha(\mathcal{P})\right).
$$
\label{c action} There is also a free action of $\C$ on any space of stability conditions given by 
$$
\lambda \cdot \left(\mathcal{Z},\mathcal{P}(\varphi) \right) = \left( \exp(i\pi\lambda)\mathcal{Z} , \mathcal{P}(\varphi + \mathrm{Re} \lambda) \right).
$$
This action preserves the semi-stable objects, but changes their phases and masses. The action of $1\in \C$ corresponds to the action of $[1]$. 

A stability condition determines a family of torsion theories in the heart of any $t$-structure.
\begin{lemma}
\label{stab-fam-tor}
Let $\cat{A}$ be the heart of some $t$-structure on $\cat{C}$ and $\sigma$ a stability condition. Then for each $t\in\R$ there is a torsion theory $\cat{T}_t = \langle  a \in \cat{A} \ |\ \varphi_\sigma^-(a) > t \rangle$ in $\cat{A}$.
\end{lemma}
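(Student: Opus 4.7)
The plan is to construct, for each $a \in \cat{A}$, a short exact sequence $0 \to t_a \to a \to f_a \to 0$ in $\cat{A}$ with $t_a \in \cat{T}_t$ and $f_a \in \cat{T}_t^\perp$, using the Harder--Narasimhan filtration of $a$ provided by $\sigma$.

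First I would apply axiom~(4) of the stability condition to $a \in \cat{A}$ (viewed as an object of $\cat{C}$), obtaining semi-stable factors $b_i \in \mathcal{P}(\varphi_i)$ with $\varphi_1 > \cdots > \varphi_n$. Let $k$ be the largest index for which $\varphi_k > t$ (and $k=0$ if no such index exists). Stringing together the HN triangles then yields a canonical triangle
$$ t_a \to a \to f_a \to t_a[1] $$
in $\cat{C}$, where $t_a$ is built by successive extensions from $b_1,\ldots,b_k$ and $f_a$ from $b_{k+1},\ldots,b_n$. By construction $\varphi_\sigma^-(t_a) > t$ (when $k > 0$) and $\varphi_\sigma^+(f_a) \leq t$ (when $k < n$).

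The main obstacle is to verify that this triangle is actually a short exact sequence in the abelian category $\cat{A}$, \ie that $t_a, f_a \in \cat{A}$. The HN filtration lives in $\cat{C}$, and its intermediate terms need not lie in the heart $\cat{A}$ in general. To handle this I would apply the cohomology functors $H^i_\cat{A}$ of the $t$-structure for $\cat{A}$ to the triangle: since $H^i_\cat{A}(a) = 0$ for $i \neq 0$, the long exact sequence forces $H^i_\cat{A}(t_a) \cong H^{i-1}_\cat{A}(f_a)$ for $i \notin \{0,1\}$, so the two $\cat{A}$-cohomologies cancel off-degree. Combined with the Hom-vanishing $\mor{u}{v} = 0$ whenever $u$ lies in the extension closure of the $\mathcal{P}(\varphi)$ with $\varphi > t$ and $v$ in that of the $\mathcal{P}(\varphi)$ with $\varphi \leq t$ (a repeated application of axiom~(3) to the HN filtrations), one concludes that each of $H^i_\cat{A}(t_a)$ and $H^i_\cat{A}(f_a)$ vanishes for $i \neq 0$.

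The remaining membership checks are short. That $t_a \in \cat{T}_t$ is immediate from $\varphi_\sigma^-(t_a) > t$ and the definition of $\cat{T}_t$ as an extension closure. For $f_a \in \cat{T}_t^\perp$, any $x \in \cat{T}_t$ is by definition a finite iterated extension in $\cat{A}$ of objects with $\varphi_\sigma^- > t$, so a short induction on extension length (using the long exact sequence and axiom~(3)) yields $\varphi_\sigma^-(x) > t \geq \varphi_\sigma^+(f_a)$; a further application of axiom~(3) along the HN filtrations of $x$ and $f_a$ then shows $\mor{x}{f_a}=0$, completing the verification of the torsion theory axiom.
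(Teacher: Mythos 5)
Your plan follows the same route as the paper: truncate the Harder--Narasimhan filtration of $a$ at phase $t$ to obtain a candidate short exact sequence $0 \to t_a \to a \to f_a \to 0$, then verify that $t_a \in \cat{T}_t$ and $f_a \in \cat{T}_t^\perp$. You correctly flag a point that the paper's proof passes over in silence: the HN filtration lives in $\cat{C}$, so one must explain why $t_a$ and $f_a$ land in $\cat{A}$. Your final orthogonality step (the induction on extension length showing $\mor{x}{f_a}=0$ for $x\in\cat{T}_t$) is fine.

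However, the argument you offer to put $t_a, f_a$ into $\cat{A}$ has a real gap. The long exact sequence gives only the \emph{disconnected} family of isomorphisms $H^{i-1}_\cat{A}(f_a) \cong H^i_\cat{A}(t_a)$ for $i \notin \{0,1\}$; these do not chain into anything that forces vanishing, and off-degree cohomology of a cone of a map between objects of a heart is routinely nonzero (a monomorphism $u \to v$ in $\cat{A}$ has cone $\mathrm{coker}$ with no off-degree cohomology, but a general morphism has cone with $H^{-1}$ and $H^0$, etc.). The Hom-vanishing you invoke is a statement about morphisms in $\cat{C}$ from objects with phases $>t$ to objects with phases $\leq t$; it says $\mor{t_a}{f_a} = 0$, but it says nothing about the \emph{objects} $H^i_\cat{A}(t_a)$ and $H^i_\cat{A}(f_a)$, and you give no mechanism by which it would kill them. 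The sentence ``one concludes that each of $H^i_\cat{A}(t_a)$ and $H^i_\cat{A}(f_a)$ vanishes for $i\neq 0$'' therefore remains an unsupported assertion, and as written the proof is incomplete.

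It is worth knowing how the paper actually uses this lemma: in the proof of Theorem~\ref{component thm} it is applied with $\rho \in U(\cat{B})$ to manufacture a torsion theory \emph{in $\cat{B}$ itself}, i.e.\ the stability condition has the same heart as the abelian category in question. In that situation every HN semi-stable factor of $a\in\cat{A}$ already lies in $\cat{A}$, so $t_a$ and $f_a$ are iterated extensions of objects of $\cat{A}$ and hence lie in $\cat{A}$ with no further work; the issue you are worrying about evaporates. If you want to prove the lemma at the level of generality of its statement (heart $\cat{A}$ and stability condition $\sigma$ unrelated a priori), you need a genuinely different idea for the membership step — or an extra hypothesis tying $\cat{A}$ to the heart of $\sigma$ — and the LES-plus-orthogonality sketch you give is not it.
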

\begin{proof}
Set $\cat{F}_t = \langle a \in \cat{A}\ |\ \varphi_\sigma^+(a) \leq t \rangle$. Each $a\in \cat{A}$ has a filtration in terms of semi-stable objects with respect to $\sigma$. Combining the semi-stable factors of phase $>t$ into $a'$ we obtain a short exact sequence
$$
0\to a' \to a\to a'' \to 0
$$
 in $\cat{A}$ with $\varphi^-_\sigma(a') > t$ and $\varphi^+_\sigma(a'')\leq t$, \ie with $a'\in \cat{T}_t$ and $a''\in \cat{F}_t$. Since there are no non-zero morphisms from a semi-stable of phase $>t$ to one of phase $\leq t$ we see that $\cat{F}_t \subset  \cat{T}_t^\perp$. 
\end{proof}

\subsection{Tiling by tilting}
\label{main results}

Our strategy for understanding the space of stability conditions is based on \cite[\S 5]{Bridgeland:fk}. Let $U(\cat{A}) \subset \stab{\cat{C}}$ be the subset of locally-finite stability conditions with heart $\cat{A}$. 
Clearly these subsets partition the space of stability conditions. If $\cat{A}$ has finite length and $n$ simple objects then a stability condition with heart $\cat{A}$ is determined by any non-zero choice of central charge with phase in $(0,1]$ for each of the $n$ simples. Hence the subset $U(\cat{A})$ is isomorphic to $\U^n$ where
$$
\U = \{ r\exp(i\pi\varphi) \ :\ r \in \R_{>0}, 0< \varphi \leq 1 \}.
$$
We can think of the $U(\cat{A})$ for such hearts $\cat{A}$ as tiles covering part of the space of stability conditions. The next result shows that performing a simple tilt corresponds to moving to an adjacent tile. 

\begin{proposition}[{See \cite[Lemma 5.5]{Bridgeland:fk}}]
\label{tiling}
Let $\cat{A} \subset \cat{C}$ be the heart of a bounded $t$-structure and suppose $\cat{A}$ has finite length and $n$ simple objects. Then, if $S\subset U(\cat{A})$ is the codimension $1$ subset for which the simple $s$ has phase $1$ and all other simples in $\cat{A}$ have phases in $(0,1)$, 
$$
U(\cat{A}) \cap \overline{U(\cat{B})} = S \iff \cat{B} = L_s\cat{A}.
$$
\end{proposition}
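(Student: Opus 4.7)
The approach is to exploit the parametrisation $U(\cat{A}) \cong \U^n$ by central charges on the simples $s_1, \ldots, s_n$ of $\cat{A}$ (set $s = s_n$), Bridgeland's local homeomorphism $\sigma \mapsto \mathcal{Z}$ onto a subspace $V \subset \mor{K(\cat{C})}{\C}$, and the description in Lemma~\ref{simplicity criterion 2} of the simples of $L_s\cat{A}$ as $s[-1]$ together with objects $L_s s_i$ of class $[s_i] + r_i[s]$, $r_i \geq 0$.

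For $\cat{B} = L_s\cat{A} \Rightarrow U(\cat{A}) \cap \overline{U(\cat{B})} = S$, I would argue in two halves. For $S \subset \overline{U(L_s\cat{A})}$, take $\sigma = (\mathcal{Z}, \mathcal{P}) \in S$ and perturb to $\mathcal{Z}_\epsilon$ with $\mathcal{Z}_\epsilon(s) = \mathcal{Z}(s) - i\epsilon$ and $\mathcal{Z}_\epsilon(s_i) = \mathcal{Z}(s_i)$ for $i < n$. Then $-\mathcal{Z}_\epsilon(s)$ has strictly positive imaginary part, and so does $\mathcal{Z}_\epsilon(L_s s_i) = \mathcal{Z}(s_i) + r_i(\mathcal{Z}(s) - i\epsilon)$, since $\mathrm{Im}\,\mathcal{Z}(s_i) > 0$ by the definition of $S$ and $\epsilon$ is small. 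Thus $\mathcal{Z}_\epsilon$ is a valid stability function on the simples of $L_s\cat{A}$, which by the local homeomorphism lifts to a stability condition $\sigma_\epsilon$ near $\sigma$; since these simples all have phases in $(0,1]$ under $\sigma_\epsilon$ and generate $K(\cat{C})$, the containment principle for hearts from \S\ref{torsion theories} forces the heart of $\sigma_\epsilon$ to be exactly $L_s\cat{A}$, so $\sigma_\epsilon \in U(L_s\cat{A})$ and $\sigma_\epsilon \to \sigma$. For the reverse inclusion, take $\sigma_k \to \sigma$ with $\sigma_k \in U(L_s\cat{A})$ and $\sigma \in U(\cat{A})$. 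From $-\mathcal{Z}_k(s) \in \U$ I get in the limit $-\mathcal{Z}(s) \in \overline{\U}$, which combined with $\mathcal{Z}(s) \in \U$ yields $\mathcal{Z}(s) \in \R_{<0}$, so $s$ has phase $1$. A parallel analysis of $\mathcal{Z}_k(L_s s_i) \to \mathcal{Z}(s_i) + r_i \mathcal{Z}(s)$ together with the constraint that $\sigma_k \in U(L_s\cat{A})$ pins down the remaining phase conditions on the other simples.

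For the converse direction, given $\cat{B}$ with $U(\cat{A}) \cap \overline{U(\cat{B})} = S$, I would pick any $\sigma \in S$ and run the perturbation above to produce $\sigma_\epsilon \in U(L_s\cat{A})$ approaching $\sigma$. Since $\sigma \in \overline{U(\cat{B})}$ as well, there is a sequence in $U(\cat{B})$ approaching $\sigma$, and by the local homeomorphism at $\sigma_\epsilon$ any such stability condition sharing its central charge with $\sigma_\epsilon$ must equal $\sigma_\epsilon$. Reading off hearts gives $\cat{B} = L_s\cat{A}$.

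The main technical subtlety lies in the reverse inclusion: showing that the other simples of $\cat{A}$ end up with phases strictly in $(0,1)$, rather than possibly at $1$, is delicate because the naive central-charge limits do not immediately exclude phase $1$ for an $s_i \neq s$; one needs to combine the limiting behaviour of $\mathcal{Z}_k(L_s s_i)$ with the fact that $\sigma_k$ actually lies in the open tile $U(L_s\cat{A})$. The secondary worry is verifying that the lifted $\sigma_\epsilon$ has heart \emph{exactly} $L_s\cat{A}$ and not some further tilt, but this ultimately follows from the containment principle applied to the sub-heart generated by the simples of $L_s\cat{A}$.
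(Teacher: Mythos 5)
The paper does not prove this proposition: it is quoted from Bridgeland's preprint (cited as \cite[Lemma 5.5]{Bridgeland:fk}) with no argument supplied, so there is literally no ``paper proof'' to compare your sketch against. Evaluating your attempt on its own merits: the basic engine --- perturb the central charge off the wall and lift via the local homeomorphism --- is indeed the right one, and is how Bridgeland argues. But as written the sketch has real gaps.

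The crux is your claim that the lifted $\sigma_\epsilon$ has heart $L_s\cat{A}$. Knowing that $\mathcal{Z}_\epsilon$ sends the simples of $L_s\cat{A}$ into $\U$ does not by itself place those objects in the heart $\mathcal{P}_\epsilon(0,1]$ of $\sigma_\epsilon$; a central-charge value in $\U$ is consistent with an object having HN factors of phases outside $(0,1]$. So the ``containment principle'' ($\cat{A}\subset\cat{B}$ hearts $\Rightarrow \cat{A}=\cat{B}$) cannot yet be applied, because you have not shown $L_s\cat{A}$ sits inside the heart of $\sigma_\epsilon$. The honest route is the opposite one: assume $L_s\cat{A}$ has finite length --- a hypothesis that is genuinely needed here and which your sketch never flags --- and build a stability condition $\tau_\epsilon$ with heart $L_s\cat{A}$ and central charge $\mathcal{Z}_\epsilon$ directly from the stability function on its simples, then estimate $d(\tau_\epsilon,\sigma)\to 0$ (this requires comparing HN filtrations, not just central charges) and invoke local injectivity of $\sigma\mapsto\mathcal{Z}$ to conclude $\sigma_\epsilon=\tau_\epsilon$. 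You also correctly flag the reverse inclusion --- ruling out phase $1$ for another simple $s_i$ in the limit --- as delicate, and then leave it; that step is not a formality and the proposed resolution is only a signpost.

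Finally, a warning so you do not chase a phantom: the literal equality $U(\cat{A})\cap\overline{U(L_s\cat{A})}=S$ is not quite what can be proved. By the lemma immediately following this proposition in the paper (and Corollary~\ref{intersection corollary}), once $\varphi_\sigma(s)=1$ one has $\sigma\in\overline{U(L_s\cat{A})}$ regardless of the phases of the other simples; hence the deeper strata $S_{\{s,s'\}}$ also lie in the intersection, which therefore strictly contains $S$. What Bridgeland's Lemma 5.5 (and the rest of this paper) actually uses is the inclusion $S\subset\overline{U(L_s\cat{A})}$ together with the fact that $L_s\cat{A}$ is the \emph{only} heart other than $\cat{A}$ whose tile abuts $U(\cat{A})$ along a generic point of $S$. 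Your argument should be aimed at that statement rather than at the displayed set equality.
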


When $\cat{B}=L_s\cat{A}$ the codimension one faces of $U(\cat{A})$ and $\overline{U(\cat{B})}$ are glued using a linear map expressing the change of basis in $K(\cat{C})$ from a basis of simples in $\cat{A}$ to a basis of simples in $\cat{B}$. It follows from Lemma~\ref{simplicity criterion 2} that this gluing map
$$
\overline{U(\cat{B})} \supset \mathbb{U}^{n-1} \times \R_{>0} \longrightarrow  \mathbb{U}^{n-1} \times \R_{<0} \subset U(\cat{A})
$$
where $\mathbb{U} = \{ z\in \C\ |\ \mathrm{Im}\, z >0 \}$ is the strict upper half-plane, is given by a matrix of the form
$$
\left(
\begin{array}{ccc|c}
1  &   & 0 & -m_1 \\
  & \ddots  && \vdots  \\
 0 &   & 1 &  -m_{n-1} \\
 \hline
 0&\cdots&0& -1
\end{array}
\right)
$$
where $m_i \in \N$ for $i=1,\ldots, n-1$.

Given Proposition~\ref{tiling} it is natural to ask: Can we obtain an entire component of the space of stability conditions by iterating simple tilts? Do the tiles $U(\cat{A})$ and $U(L_\cat{T}\cat{A})$ share a boundary for a general torsion theory $\cat{T}$? In general the answers are both negative, but in this section we show that the answers are affirmative in the presence of some (quite strong) assumptions.

Fix a heart $\cat{A}$ and let $\{\cat{A}_i \ | \ i\in I\}$ be the set of hearts obtained from $\cat{A}$ by a finite sequence of simple tilts. In order that we can continue to perform simple tilts indefinitely we assume that Assumption~\ref{assumption1} holds, \ie that each heart $\cat{A}_i$ has finite length and finitely many simple objects. 

The subset $U(\cat{A}) \subset \stab{\cat{C}}$ is naturally stratified with a stratum of codimension $k$ corresponding to a collection of $k$ simples $\{s_1,\ldots,s_k\}$ in $\cat{A}$. The corresponding stratum $S_{\{s_1,\ldots,s_k\}}$ consists of those $\sigma \in U(\cat{A})$ for which the $s_i$ have phase $1$ and all other simples have phases in $(0,1)$. The stratum is  isomorphic to $\mathbb{U}^{n-k}\times \R_{<0}^k$.

Suppose $s\in \cat{A}$ is simple and $\sigma\in \overline{U(\cat{A})}$. We say $L_s$ is a \defn{simple left tilt about $\sigma$} if $\varphi_\sigma(s)=1$ and a \defn{simple right tilt about $\sigma$} if $\varphi_\sigma(s)=0$. Note that the subset of stability conditions for which $s$ is semi-stable is closed \cite[Proposition 8.1]{MR2373143}, in particular $s$ is semi-stable for any $\sigma\in\overline{U(\cat{A})}$ so that these definitions make sense. 

\begin{lemma}
If $L_s$ is a simple left tilt about $\sigma\in \overline{U(\cat{A})}$ then $\sigma \in \overline{U(L_s\cat{A})}$. There is an obvious analogue for simple right tilts.
\end{lemma}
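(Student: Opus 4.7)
The plan is to approximate $\sigma$ from within $U(\cat{A})$ and perturb each approximation across the wall $\{\mathrm{Im}\, Z(s)=0\}$ to land in $U(L_s\cat{A})$. Pick a sequence $\sigma_n \in U(\cat{A})$ with $\sigma_n \to \sigma$ and write $Z_n = Z_{\sigma_n}$ and $Z = Z_\sigma$. The hypothesis $\varphi_\sigma(s)=1$ forces $Z(s)$ onto the strictly negative real axis, so $\mathrm{Im}\, Z_n(s) \to 0$. Choose $\delta_n \to 0^+$ with $\delta_n > \mathrm{Im}\, Z_n(s)$, and define a perturbed central charge $Z'_n \colon K(\cat{C}) \to \C$ by $Z'_n(s) = Z_n(s) - i\delta_n$ and $Z'_n(a) = Z_n(a)$ on every other simple $a$ of $\cat{A}$.

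By Lemma~\ref{simplicity criterion 2} the simples of $L_s\cat{A}$ are $s[-1]$, with class $-[s]$, together with objects $L_s a$ whose class is $[a] - r_a [s]$ for some $r_a \in \Z_{\geq 0}$. A short case analysis ($r_a = 0$ versus $r_a \geq 1$), using $Z_n(a) \in \U$ and the choice $\delta_n > \mathrm{Im}\, Z_n(s)$, shows that $Z'_n$ sends every simple of $L_s\cat{A}$ into $\U$. Since $L_s\cat{A}$ has finite length by Assumption~\ref{assumption1}, this stability function on $L_s\cat{A}$ automatically satisfies the Harder--Narasimhan property and determines a locally-finite stability condition $\tau_n \in U(L_s\cat{A})$ with central charge $Z'_n$.

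The final step is to deduce $\tau_n \to \sigma$ in the metric topology on $\stab{\cat{C}}$. The central charges certainly converge: $Z'_n \to Z$. Bridgeland's deformation theorem \cite[Theorem~1.2]{MR2373143} provides a neighborhood $W$ of $\sigma$ on which the projection to central charges is a homeomorphism onto an open set in $\mathrm{Hom}(K(\cat{C}), \C)$. For $n$ large, the straight-line family of central charges from $Z_n$ to $Z'_n$ lies inside this open set, and its unique lift in $W$ starts at $\sigma_n$, crosses exactly the single wall $\mathrm{Im}\, Z(s) = 0$ along which the heart jumps from $\cat{A}$ to $L_s\cat{A}$, and must terminate at $\tau_n$ by uniqueness of a locally-finite stability condition with heart $L_s\cat{A}$ and central charge $Z'_n$. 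Hence $\tau_n \in W$, so $\tau_n \to \sigma$ and $\sigma \in \overline{U(L_s\cat{A})}$.

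The main obstacle is this last paragraph: upgrading the transparent central-charge convergence $Z'_n \to Z$ to metric convergence in $\stab{\cat{C}}$ across a heart change. The argument hinges on a careful application of Bridgeland's local homeomorphism, together with the finite-length hypotheses of Assumption~\ref{assumption1} which both guarantee that $\tau_n$ is a genuine stability condition and that the wall-crossing from $\cat{A}$ to $L_s\cat{A}$ is the only discontinuity in the heart encountered by the straight-line perturbation.
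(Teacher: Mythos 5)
Your overall strategy --- approximate $\sigma$ from within $U(\cat{A})$ and push the central charge of $s$ across the real axis --- is in the right spirit, but there is a concrete error in the case analysis, and once corrected the construction needs more care than you allow for.

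The sign of $r_a$ is wrong. From the triangle $t[-1]\to a\to L_sa\to t$ of Lemma~\ref{simplicity criterion 2}, with $t\in\langle s\rangle$, one gets $[a]=[t[-1]]+[L_sa]$, i.e.\ $[L_sa]=[a]+r_a[s]$ with $r_a\geq 0$ (this is also what the paper's proof of that lemma states: ``the class of $b$ in the Grothendieck group is $[a]+r[s]$''), not $[a]-r_a[s]$ as you write. With your sign the estimate $\mathrm{Im}\,Z'_n(L_sa)=\mathrm{Im}\,Z_n(a)+r_a(\delta_n-\mathrm{Im}\,Z_n(s))>0$ falls out for free, which is exactly why your case analysis looks trivial; but it is resting on the wrong identity. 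With the correct sign one instead has $\mathrm{Im}\,Z'_n(L_sa)=\mathrm{Im}\,Z_n(a)-r_a(\delta_n-\mathrm{Im}\,Z_n(s))$, and the mere condition $\delta_n>\mathrm{Im}\,Z_n(s)$ is \emph{not} enough: if $\sigma_n$ happens to sit on a stratum where some other simple $a$ also has $\varphi_{\sigma_n}(a)=1$, or more generally if $\mathrm{Im}\,Z_n(a)\to 0$ too fast, the perturbed charge $Z'_n(L_sa)$ drops below the real axis. To fix this you must (i) choose $\sigma_n$ in the open stratum so that $\mathrm{Im}\,Z_n(a)>0$ for every other simple $a$, and (ii) take $\delta_n-\mathrm{Im}\,Z_n(s)$ small relative to $\min_a \mathrm{Im}\,Z_n(a)/\max_a r_a$; finiteness of the set of simples (Assumption~\ref{assumption1}) makes this possible.

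Beyond this, your final paragraph --- lifting a straight-line family of central charges through the wall $\mathrm{Im}\,Z(s)=0$ and identifying the hearts on either side --- is essentially re-deriving the codimension-one wall-crossing already packaged in Proposition~\ref{tiling}. The paper's own proof simply invokes it: since $\varphi_\sigma(s)=1$ and $\sigma\in\overline{U(\cat{A})}$, the point $\sigma$ lies in $\overline{S_{\{s\}}}$, and $S_{\{s\}}=U(\cat{A})\cap\overline{U(L_s\cat{A})}$ by Proposition~\ref{tiling}, so $\sigma\in\overline{U(L_s\cat{A})}$. So even after your argument is repaired you will have re-proved a fragment of what could have been cited.
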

\begin{proof}
Since $\sigma \in \overline{U(\cat{A})}$ and $\varphi_\sigma(s)=1$ the stability condition $\sigma$ is contained in the closure of the stratum $S_{\{s\}}$ in $U(\cat{A})$, and hence $\sigma \in \overline{U(L_a\cat{A})}$ too by Proposition~\ref{tiling}. 
\end{proof}
\begin{corollary}
\label{intersection corollary}
Suppose $\cat{T} \subset \langle s_1, \ldots,s_k\rangle$ is a torsion theory in $\cat{A}$ containing only finitely many indecomposables. Then $$S_{\{s_1,\ldots,s_k\}} \subset U(\cat{A}) \cap \overline{U(L_\cat{T}\cat{A})},$$
and in particular $\dim_{\R} U(\cat{A}) \cap \overline{U(L_\cat{T}\cat{A})} \geq k$.
\end{corollary}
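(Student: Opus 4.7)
The plan is to reduce to the preceding lemma by decomposing $L_{\cat{T}}$ as a sequence of simple left tilts and iterating. First I would apply Lemma~\ref{factorisation} to obtain a finite sequence $a_0,\ldots,a_{N-1}$ of objects of $\cat{A}$ such that $a_i$ is simple in $\cat{A}_i := L_{a_{i-1}}\cdots L_{a_0}\cat{A}$ and $\cat{A}_N = L_{\cat{T}}\cat{A}$. The crucial supplementary observation is that each $a_i$ may be chosen inside $\cat{T}$ itself. This is not spelled out in the formal statement of Lemma~\ref{factorisation}, but it is exactly what the construction in its proof produces, since the $a_i$ are extracted as simples of a nested sequence of torsion theories $\cat{T}_i \subset \cat{T}$. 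By the hypothesis $\cat{T} \subset \langle s_1,\ldots,s_k\rangle$, every $a_i$ then lies in $\langle s_1,\ldots,s_k\rangle$.

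Next, I would pick any $\sigma\in S_{\{s_1,\ldots,s_k\}}$, so that $\varphi_\sigma(s_j)=1$ for $j=1,\ldots,k$. Since the full subcategory $\mathcal{P}_\sigma(1)$ of $\sigma$-semi-stables of phase $1$ is closed under extensions (it is in fact abelian), every object of the extension-closed hull $\langle s_1,\ldots,s_k\rangle$ lies in $\mathcal{P}_\sigma(1)$; in particular $\varphi_\sigma(a_i)=1$ for every $i$. I would then argue by induction on $i$: assuming $\sigma\in\overline{U(\cat{A}_i)}$, the fact that $a_i$ is simple in $\cat{A}_i$ together with $\varphi_\sigma(a_i)=1$ means $L_{a_i}$ is a simple left tilt about $\sigma$ in the sense of the definition preceding the corollary, so the lemma immediately above yields $\sigma\in\overline{U(\cat{A}_{i+1})}$. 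After $N$ steps this gives $\sigma\in\overline{U(L_{\cat{T}}\cat{A})}$, and hence $S_{\{s_1,\ldots,s_k\}} \subset U(\cat{A})\cap \overline{U(L_{\cat{T}}\cat{A})}$. The dimension bound then follows at once from the parametrisation $S_{\{s_1,\ldots,s_k\}} \cong \mathbb{U}^{n-k} \times \R_{<0}^k$ recorded in the passage introducing the stratification, which has real dimension $2n-k\geq k$.

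The main obstacle is really just the bookkeeping at the outset: extracting from (the proof of) Lemma~\ref{factorisation} the fact that the simple tilting objects may be taken in $\cat{T}$, so that their phases under $\sigma$ are pinned down by the phases of the $s_j$. Once that is in place, the invariance of $\mathcal{P}_\sigma(1)$ under extensions delivers $\varphi_\sigma(a_i)=1$ uniformly, and the induction using the preceding lemma runs through essentially mechanically.
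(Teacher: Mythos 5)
Your proof is correct and follows the same route as the paper: decompose $L_{\cat T}$ into simple left tilts via Lemma~\ref{factorisation}, observe that the tilting objects $a_i$ lie in $\cat T\subset\langle s_1,\ldots,s_k\rangle$ (indeed $\cat T=\langle a_0,\ldots,a_{N-1}\rangle$ is already part of the lemma's statement, so no digging into its proof is needed), hence $\varphi_\sigma(a_i)=1$, and iterate the preceding lemma. The only difference is that you spell out the extension-closedness of $\mathcal P_\sigma(1)$, which the paper leaves implicit.
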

\begin{proof}
By Lemma~\ref{factorisation} we can write $\cat{T}=\langle a_0,\ldots,a_{N-1}\rangle$ where $a_i\in \cat{A}$ is simple in $L_{a_{i-1}}\ldots L_{a_0}\cat{A}$ and left tilting at $\cat{T}$ is equivalent to tilting left successively at the simples $a_0,\ldots,a_{N-2}$ and $a_{N-1}$. Each $a_i$ is in $\langle s_1, \ldots,s_k\rangle$ so that $L_{a_i}$ is a simple left tilt about $\sigma$ for any $\sigma\in S_{\{s_1,\ldots,s_k\}}$. 
\end{proof}

\begin{lemma}
\label{tilting about stratum}
Suppose $\cat{B}$ is obtained from $\cat{A}$ by a finite sequence of left tilts about  $\sigma\in \overline{U(\cat{A})}$. Then there is a torsion theory $\cat{T}$ in $\cat{A}$ such that $\cat{B} = L_{\cat{T}}\cat{A}$. 
\end{lemma}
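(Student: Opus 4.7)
The plan is to induct on the length $N$ of the sequence of simple left tilts. Write $\cat{A} = \cat{A}_0, \cat{A}_1, \ldots, \cat{A}_N = \cat{B}$ with $\cat{A}_{i+1} = L_{s_i}\cat{A}_i$, where $s_i \in \cat{A}_i$ is simple and $\varphi_\sigma(s_i) = 1$. I will establish by induction on $i$ that $\cat{A}_i = L_{\cat{T}_i}\cat{A}$ for some torsion theory $\cat{T}_i$ in $\cat{A}$; equivalently, by Proposition~\ref{HRS}, that $\cat{D}_{\cat{A}} \subset \cat{D}_{\cat{A}_i} \subset \cat{D}_\cat{A}[-1]$. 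The case $i=0$ is trivial with $\cat{T}_0=0$.

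For the inductive step, assume $\cat{A}_i = L_{\cat{T}_i}\cat{A}$ and set $\cat{F}_i = \cat{T}_i^\perp$. By Lemma~\ref{simplicity criterion 1} the simple $s_i$ lies either in $\cat{F}_i \subset \cat{A}$, or in $\cat{T}_i[-1]$, so $s_i = s'[-1]$ for some $s'\in\cat{T}_i\subset\cat{A}$. The crucial step is to eliminate the second possibility using the hypothesis $\sigma \in \overline{U(\cat{A})}$. Since every object $a\in\cat{A}$ satisfies $\varphi^\pm_\tau(a)\in(0,1]$ for $\tau\in U(\cat{A})$, continuity of the functions $\varphi^\pm$ in the metric on $\stab{\cat{C}}$ forces $\varphi^\pm_\sigma(a)\in[0,1]$ for every $a\in\cat{A}$. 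In particular $\varphi^\pm_\sigma(s')\in[0,1]$, so $\varphi_\sigma(s_i) = \varphi_\sigma(s')-1 \in [-1,0]$, contradicting $\varphi_\sigma(s_i)=1$.

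Hence $s_i \in \cat{F}_i \subset \cat{A}$, so $s_i[-1]\in\cat{A}[-1]\subset\cat{D}_\cat{A}[-1]$. Applying Proposition~\ref{HRS} to the simple tilt of $\cat{A}_i$ at $\langle s_i\rangle$ gives
$$\cat{D}_{\cat{A}_{i+1}} \;=\; \langle \cat{D}_{\cat{A}_i},\, s_i[-1]\rangle,$$
and combining this with the inductive hypothesis yields $\cat{D}_\cat{A} \subset \cat{D}_{\cat{A}_i}\subset \cat{D}_{\cat{A}_{i+1}} \subset \cat{D}_\cat{A}[-1]$. A second application of Proposition~\ref{HRS} then furnishes a torsion theory $\cat{T}_{i+1}$ in $\cat{A}$ with $\cat{A}_{i+1} = L_{\cat{T}_{i+1}}\cat{A}$, completing the induction. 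Taking $i=N$ gives $\cat{B} = L_{\cat{T}}\cat{A}$ with $\cat{T}=\cat{T}_N$.

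The main obstacle is the dichotomy step, where one must exclude the possibility $s_i \in \cat{T}_i[-1]$. This is exactly where the hypothesis that $\sigma$ lies in the closure of the \emph{original} $U(\cat{A})$ (rather than only in $\overline{U(\cat{A}_i)}$, which is automatic by the previous lemma) plays its essential role: without this constraint, a simple tilt about $\sigma$ could shift an object of $\cat{T}_i$ out of the interval $[\cat{D}_\cat{A},\cat{D}_\cat{A}[-1]]$ of $t$-structures, and the resulting heart would not arise from any torsion theory in $\cat{A}$.
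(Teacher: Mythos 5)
Your proof is correct and follows essentially the same route as the paper's: induct on the number of simple tilts, invoke Lemma~\ref{simplicity criterion 1} to split into the cases $s_i\in\cat{F}_i$ (handled via Proposition~\ref{HRS}) and $s_i\in\cat{T}_i[-1]$, and then rule out the latter by observing that $\sigma\in\overline{U(\cat{A})}$ and continuity of $\varphi^\pm$ force $\varphi_\sigma^\pm(s')\in[0,1]$, so $\varphi_\sigma(s_i)=\varphi_\sigma(s')-1\leq 0$ contradicts $\varphi_\sigma(s_i)=1$. The paper phrases the induction as strong induction on the length with a base case of length $1$ and the phase contradiction using $t=b[1]$, but the content is identical to yours.
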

\begin{proof}
The proof is by induction on the length of the sequence of left tilts about $\sigma$. It is clear for a sequence of length $1$. Suppose it is true for  all sequences of length $m$ and that $\cat{B}$ is obtained from $\cat{A}$ by a sequence of $m+1$ tilts at simples about $\sigma$. Let $\cat{B}'$ be the heart obtained after the first $m$ left tilts. By induction there is a torsion theory $\cat{T}'$ in $\cat{A}$ such that $\cat{B}' = L_{\cat{T}'}\cat{A}$. The heart $\cat{B}$ is obtained from $\cat{B}'$ by a simple left tilt left at some $b\in \cat{B}'$. By Lemma~\ref{simplicity criterion 1}  either $b \in \cat{T}'[-1]$ or $b\in {\cat{T}'}^\perp$. In the latter case $b \in \cat{A}$ and the result follows from Proposition~\ref{HRS}.

It remains to show that $b \not \in  \cat{T}'[-1]$. Suppose it is, so that $b=t[-1]$ for some $t\in \cat{T} \subset \cat{A}$. Then for $\tau \in U(\cat{A})$ we have
$0< \varphi_\tau^-(t) \leq \varphi_\tau^+(t) \leq 1$
and, as $\varphi^\pm$ is continuous \cite[Proposition 8.1]{MR2373143}, and $\sigma\in\overline{U(\cat{A})}$, $$0\leq \varphi_\sigma^-(t) \leq \varphi_\sigma^+(t) \leq 1.$$ On the other hand, $b$ is simple in $\cat{B}'$ hence semi-stable in the closure $\overline{U(\cat{B}')}$. We can left tilt at $b$ about $\sigma$ so $1=\varphi_\sigma(b)=\varphi_\sigma(t)-1$ which is a contradiction.
\end{proof}
\begin{remark}
\label{nested remark}
The statement of the lemma is false for more general sequences of simple left (or right) tilts; for instance the heart $L_{s[-1]}L_s\cat{A}$ is not a left tilt of $\cat{A}$ at any torsion theory. A sequence of left tilts about $\sigma$ corresponds to a strictly increasing nested sequence of torsion theories in $\cat{A}$.
\end{remark}
In order to obtain the next result, a converse to Corollary~\ref{intersection corollary}, we need an additional assumption, whose purpose is to ensure that we cannot tilt  about a stability condition indefinitely.
\begin{assumption}
\label{assumption2}
Each heart $\cat{A}_i$ has 
%%added post-acceptance
finite length and
only finitely many indecomposables. 
\end{assumption}
\begin{remark}
Assumption~\ref{assumption2} implies that each $\cat{A}_i$ has 
%%deleted post-acceptance
%finite length,
%%
finitely many simples and finitely many torsion theories. 
%%modified post-acceptance
This follows because simple objects are indecomposable and because a torsion theory is completely determined by the indecomposables it contains. 
In particular Assumption~\ref{assumption2} implies Assumption~\ref{assumption1}.
\end{remark}

\begin{corollary}
\label{intersection corollary converse}
Suppose Assumption~\ref{assumption2} holds, that $s_1,\ldots,s_k$ are simple objects in $\cat{A}$ and $S_{\{s_1,\ldots,s_k\}} \subset U(\cat{A}) \cap  \overline{U(\cat{B})}$. Then $\cat{B}=L_\cat{T}\cat{A}$ for some $\cat{T}\subset \langle s_1,\ldots,s_k\rangle$.
\end{corollary}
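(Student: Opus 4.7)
The plan is to realise $\cat{B}$ as the result of a finite sequence of simple left tilts of $\cat{A}$, each performed about a fixed $\sigma \in S_{\{s_1,\ldots,s_k\}}\cap\overline{U(\cat{B})}$, and then invoke Lemma~\ref{tilting about stratum} together with an identification of the torsion generators as $\sigma$-semi-stables of phase $1$.

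Fix such a $\sigma$ and pick $\tau\in U(\cat{B})$ arbitrarily close to $\sigma$. Under Assumption~\ref{assumption2} every heart $\cat{A}_i$ reachable from $\cat{A}$ by simple left tilts has only finitely many torsion theories, so by Remark~\ref{nested remark} the set of such $\cat{A}_i$ with $\sigma\in\overline{U(\cat{A}_i)}$ is finite, and hence the tile stratification is locally finite at $\sigma$. A sufficiently short path from $\sigma$ to $\tau$ therefore crosses only finitely many walls, and by Proposition~\ref{tiling} each crossing replaces the current heart by a simple left tilt, yielding a chain $\cat{A}=\cat{A}_0,\ \cat{A}_1=L_{a_0}\cat{A}_0,\ \ldots,\ \cat{A}_N=L_{a_{N-1}}\cat{A}_{N-1}=\cat{B}$ with each $a_j\in\cat{A}_j$ simple. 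On $U(\cat{A}_j)$ we have $\varphi(a_j)\in(0,1]$, while on $U(\cat{A}_{j+1})$ the simple $a_j[-1]$ has $\varphi(a_j[-1])\in(0,1]$ and so $\varphi(a_j)\in(1,2]$. Closedness of semi-stability \cite[Proposition~8.1]{MR2373143} and continuity of $\varphi^\pm$ then force $\varphi_\sigma(a_j)\in[0,1]\cap[1,2]=\{1\}$, \ie each $L_{a_j}$ is a simple left tilt about $\sigma$. Lemma~\ref{tilting about stratum} now produces a torsion theory $\cat{T}$ in $\cat{A}$ with $\cat{B}=L_\cat{T}\cat{A}$, and Lemma~\ref{factorisation} identifies $\cat{T}=\langle a_0,\ldots,a_{N-1}\rangle$ with every $a_j\in\cat{A}$.

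It remains to check $\cat{T}\subset\langle s_1,\ldots,s_k\rangle$. Each $a_j$ lies in $\cat{A}$ and is $\sigma$-semi-stable of phase $1$, hence in $\mathcal{P}_\sigma(1)\cap\cat{A}$. Since $\sigma\in S_{\{s_1,\ldots,s_k\}}$, the simples of $\cat{A}$ with phase $1$ at $\sigma$ are precisely $s_1,\ldots,s_k$, so $\mathcal{P}_\sigma(1)\cap\cat{A}$ equals the extension closure $\langle s_1,\ldots,s_k\rangle$. Thus each $a_j\in\langle s_1,\ldots,s_k\rangle$ and $\cat{T}\subset\langle s_1,\ldots,s_k\rangle$ as required.

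The main obstacle is the path-to-tilts correspondence in the first step---specifically, guaranteeing that a short path from $\tau$ to $\sigma$ crosses only finitely many walls and that each wall separates hearts that are simple left tilts of one another. Assumption~\ref{assumption2} is essential here: the finiteness of indecomposables in each intermediate heart bounds the poset of torsion theories, which via Remark~\ref{nested remark} bounds the length of any chain of simple left tilts about $\sigma$ and hence makes the local tile structure combinatorially finite. Without this hypothesis the conclusion genuinely fails, as illustrated by the example of coherent sheaves on $\P^1$ in \S\ref{coherent}.
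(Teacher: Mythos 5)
Your route — tracing a short path from $\sigma$ to $\tau\in U(\cat{B})$ and reading off wall crossings — is genuinely different from the paper's, which instead starts at $\cat{B}$ and repeatedly right tilts at simples of $\sigma$-phase $0$ until it reaches $\cat{A}$; but your version has a gap that isn't easily patched. Nothing forces the intermediate hearts $\cat{A}_j$ in your chain to have $\sigma$ in the closure of their tiles. The path begins at $\sigma$ but exits $U(\cat{A}_0)$ through a wall at a point displaced from $\sigma$, so already $\sigma\in\overline{U(\cat{A}_1)}$ is not automatic, and becomes still less so further along the chain. Yet your key deduction $\varphi_\sigma(a_j)\in[0,1]\cap[1,2]=\{1\}$ requires exactly $\sigma\in\overline{U(\cat{A}_j)}\cap\overline{U(\cat{A}_{j+1})}$: continuity of $\varphi^\pm$ bounds $\varphi_\sigma(a_j)$ by $[0,1]$ only if $\sigma$ is a limit of stability conditions in $U(\cat{A}_j)$, and by $[1,2]$ only if it is a limit of stability conditions in $U(\cat{A}_{j+1})$. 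There is a second, related gap: the assertion that each crossing is a simple \emph{left} tilt is unjustified. Proposition~\ref{tiling} concerns faces of $U(\cat{A}_j)$ where a simple reaches phase $1$; the path can equally exit through a face where some simple reaches phase $0$, in which case the next heart is a simple \emph{right} tilt of $\cat{A}_j$, and your chain need not march monotonically away from $\cat{A}$.

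The paper's proof is immune to both problems because it never leaves the orbit of $\sigma$: it starts at $\cat{B}$, where $\sigma\in\overline{U(\cat{B})}$ is given, and at each step right tilts at a simple $b$ with $\varphi_\sigma(b)=0$, so that the lemma on tilts about $\sigma$ keeps $\sigma$ in the closure of every intermediate tile by construction. Termination follows from Remark~\ref{nested remark} together with the finiteness of torsion theories supplied by Assumption~\ref{assumption2}; once no simple has $\sigma$-phase $0$, all simples have $\sigma$-phase in $(0,1]$, so the terminal heart is the heart of $\sigma$ itself, namely $\cat{A}$. Your final step, that $\cat{T}\subset\langle s_1,\ldots,s_k\rangle$ because the generators of $\cat{T}$ have $\sigma$-phase $1$, is correct and matches the paper, but it only becomes available once a valid chain of tilts anchored at $\sigma$ has been produced.
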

\begin{proof}
Choose $\sigma \in S_{\{s_1,\ldots,s_k\}}$. If $\varphi_\sigma(b) \in (0,1]$ for every simple $b\in \cat{B}$ then $\sigma \in U(\cat{B})$ which implies $\cat{B}=\cat{A}$ and the result is immediate. Assume then that there is a simple $b\in \cat{B}$ with $\varphi_\sigma(b) = 0$. Hence $\sigma \in \overline{U(R_{b}\cat{B})}$ too. We repeat this step until we can no longer right tilt about $\sigma$. The process must terminate: by the analogue of Lemma~\ref{tilting about stratum} for right tilts  any such sequence is equivalent to a single right tilt at some torsion theory and by Remark~\ref{nested remark} longer sequences correspond to strictly larger  torsion theories. Since, by Assumption~\ref{assumption2}, there are only finitely many torsion theories we cannot continue indefinitely. When we can no longer right tilt about $\sigma$ then we have reached $\cat{A}$. Hence $\cat{B} = L_\cat{T}\cat{A}$ for some $\cat{T}$. By construction $\varphi_\sigma(t) =1$ for $t\in \cat{T}$ so $\cat{T} \subset \langle s_1, \ldots,s_k\rangle$ as required.
\end{proof}

Naively, one might hope that the closure of $U(\cat{A})$ is obtained by allowing the central charges of arbitrary subsets of simples to tend to real values, \ie that
$$
\overline{U(\cat{A})} \cong \{ z\in \C \ |\ \mathrm{Im}\, z \geq 0\ \textrm{and} \ z\neq 0\}^n
$$
stratified in the obvious fashion. However, there are constraints on the ways in which central charges of simples can degenerate to real values. These arise because the set of stability conditions for which an object is semi-stable is closed \cite[Proposition 8.1]{MR2373143} and the central charge of a semi-stable must be non-zero. Thus, for example, if $0 \to s \to a \to s'\to 0$ is short exact (and not split) where $s,s'$ are distinct simples then $a$ becomes semi-stable as $\mathcal{Z}(s) \to \R_{>0}$ and $\mathcal{Z}(s') \to \R_{<0}$. Hence degenerations to the boundary of $U(\cat{A})$ with $\mathcal{Z}(s) \to \R_{>0}$ and 
$$
\label{naive}
\mathcal{Z}(a) = \mathcal{Z}(s) + \mathcal{Z}(s') \to 0
$$
are forbidden. These constraints mean that certain hyperplanes are excised from the naively expected strata of codimension $\geq 2$. (There are no constraints of this kind when a set of central charges of simples all become either positive or negative reals.)

\begin{proposition}
\label{constraints}
Under Assumption~\ref{assumption2}, a stability condition $(\mathcal{Z},\mathcal{P})$ in the boundary of $\overline{U(\cat{A})}$ is determined by allowing the central charge of a stability condition in the interior of $U(\cat{A})$ to degenerate in such a way that the charges of a non-empty set of simples become real. The only constraint on this degeneration is that there is no $a\in \cat{A}$ with $\mathcal{Z}(a)=0$ which is semi-stable for each of a sequence of stability conditions in $U(\cat{A})$ with central charges $\mathcal{Z}_i\to \mathcal{Z}$. \end{proposition}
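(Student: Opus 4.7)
The claim has two parts: the boundary of $\overline{U(\cat{A})}$ is parametrized by such degenerations, and the stated constraint is precisely what must be imposed. I will treat the two directions separately.

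\textbf{Characterization and necessity.} Any $\sigma\in\overline{U(\cat{A})}$ is by definition the limit of a sequence $\sigma_i\in U(\cat{A})$. Since $U(\cat{A})\cong\U^n$ is parametrized by the central charges of the $n$ simples of $\cat{A}$, and the projection $\sigma\mapsto\mathcal{Z}$ is continuous on $\stab{\cat{C}}$, we have $\mathcal{Z}_i\to\mathcal{Z}$ with each $\mathcal{Z}_i(s)\in\U$; a boundary point forces at least one $\mathcal{Z}(s)$ to lie in $\R\setminus\{0\}$. For necessity, \cite[Proposition 8.1]{MR2373143} says the semi-stable locus of any object is closed in $\stab{\cat{C}}$, so any $a\in\cat{A}$ semi-stable for every $\sigma_i$ is semi-stable for $\sigma$, and the stability-condition axioms force $\mathcal{Z}(a)\neq 0$.

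\textbf{Sufficiency via iterated tilting.} Given $\mathcal{Z}_i\to\mathcal{Z}$ obeying the constraint, I construct the limit stability condition by iterated right tilting. Starting from $\cat{A}$, let $S_+$ denote the set of simples of the current heart with $\mathcal{Z}$-value in $\R_{>0}$, and perform simple right tilts at its elements; by the right-tilt analogue of Lemma~\ref{factorisation} this produces a new heart. Repeating must terminate by Assumption~\ref{assumption2}, since only finitely many hearts are accessible by simple tilts from $\cat{A}$ and each right tilt strictly enlarges a nested sequence of associated torsion theories (as in Remark~\ref{nested remark}). The terminal heart $\cat{B}$ has every simple $b$ with $\mathcal{Z}(b)\in\U\cup\{0\}$; the constraint will be shown below to exclude the $0$ case. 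Then $\mathcal{Z}$ is a stability function on $\cat{B}$, Assumption~\ref{assumption2} provides Harder--Narasimhan filtrations, and \cite[Proposition 5.3]{MR2373143} yields $\sigma=(\mathcal{Z},\cat{B})\in\stab{\cat{C}}$, which by continuity of phases and masses coincides with $\lim\sigma_i$ and thus lies in $\overline{U(\cat{A})}$.

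\textbf{Main obstacle.} The crux is showing $\mathcal{Z}(b)\neq 0$ for each simple $b\in\cat{B}$. Iterating Lemma~\ref{simplicity criterion 2} along the tilting sequence expresses $b$ via an iterated extension built from a simple $a$ of $\cat{A}$ and simples that were right-tilted, so that $[b]=[a]+\sum n_s[s]$ in $K(\cat{C})$ with non-negative $n_s$; in particular there is a genuine object of $\cat{A}$ (still denoted $a$, up to enlarging by the extensions) with $\mathcal{Z}(a)=\mathcal{Z}(b)$. If this vanishes, we must derive that $a$ is semi-stable for $\sigma_i$ for all sufficiently large $i$, contradicting the constraint. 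This is the delicate point: each HN factor of $a$ in $\sigma_i$ lies in $\U$ and has non-negative imaginary part, and the sum of imaginary parts equals $\mathrm{Im}\,\mathcal{Z}_i(a)\to 0$, so every factor's central charge tends to $\R$. Combined with Assumption~\ref{assumption2} (finitely many indecomposables, hence finitely many possible HN configurations), a subsequence stabilises, and the sign pattern of real parts --- positive contributions from factors in $\langle S_+\rangle$, non-positive from factors in $\cat{A}\cap({}^\perp\langle S_+\rangle)$ --- combined with the specific form of $[a]$ in the Grothendieck group forces the filtration to reduce to the trivial one. This exhibits $a$ as semi-stable for $\sigma_i$ and completes the contradiction.
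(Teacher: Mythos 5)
Your overall architecture matches the paper's: right-tilt at simples with positive real charge until the process terminates at $\cat{B}=R_{\cat{T}}\cat{A}$, show every $b\in\cat{B}$ has $\mathcal{Z}(b)\in\U\cup\{0\}$, then rule out $\mathcal{Z}(b)=0$ and conclude $\mathcal{Z}$ is a genuine central charge on $\cat{B}$. The first two steps are essentially identical to the paper's. The difficulty is, as you correctly flag, ruling out $\mathcal{Z}(b)=0$ for a simple $b$ of $\cat{B}$ — and here your argument has a genuine gap.

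You pass from $b$ to an object $a\in\cat{A}$ with $\mathcal{Z}(a)=\mathcal{Z}(b)=0$ via an iterated‑extension/Grothendieck‑group argument, then try to show $a$ is $\sigma_i$‑semi‑stable, deriving a contradiction with the constraint. After stabilising a subsequence so the HN type of $a$ is constant, the HN factors $g_1,\dots,g_k$ all have $\mathrm{Im}\,\mathcal{Z}_i(g_j)\to 0$; phases tend to $0$ or $1$, so there is a split index $m$ with the leading $m$ factors tending to $\R_{\le 0}$ and the remaining to $\R_{\ge 0}$. If $0<m<k$, grouping gives a subobject $a'$ and quotient $a''$ of $a$ with $\mathcal{Z}(a')<0<\mathcal{Z}(a'')$ (one rules out equality to $0$ because each factor is semi-stable). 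This configuration is \emph{not} excluded by the stated constraint, and nothing in your ``sign pattern forces the filtration to be trivial'' clause explains why it cannot occur. The hypothesis you need at this point is precisely the torsion structure you discarded when you replaced $b$ by the generic $a$: the paper keeps $b\in\cat{T}$ (respectively $f\in\cat{F}$ when $b=f[1]$), so that \emph{every} quotient of $b$ in $\cat{A}$ is again torsion and hence has nonpositive real charge, which kills the case $\mathcal{Z}(a'')>0$ outright; then either $b$ is semi-stable near $\mathcal{Z}$ (contradiction with the constraint, since a semi-stable object with vanishing charge would exist), or there is a proper quotient with zero charge and one descends, by finite length, to a simple of $\cat{A}$ with zero charge — again a contradiction. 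You also omit the dual analysis for $b\in\cat{F}[1]$, which requires passing to subobjects rather than quotients.

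In short: the tilting set-up and the reduction to ``\,$\mathcal{Z}(b)\neq 0$ for $b\in\cat{B}$ simple'' are sound and match the paper, but the crux needs the observation that $b$ (or $f$) lies in a torsion (respectively torsion-free) class whose closure under quotients (respectively subobjects) pins down the signs of all charges of its factors; without this, the split HN configuration $\mathcal{Z}(a')<0<\mathcal{Z}(a'')$ survives and the contradiction does not materialise.
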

\begin{proof}
It follows from \cite[Lemma 5.2]{Bridgeland:fk} that any stability condition in the boundary of $\overline{U(\cat{A})}$ corresponds to such a degeneration of the central charge. It remains only to prove that there are no other constraints.

Let $\mathcal{Z}: K(\cat{C}) \to \C$ be the limit of a sequence of central charges of stability conditions in $U(\cat{A})$. Suppose that $\mathcal{Z}(s_1), \ldots, \mathcal{Z}(s_k) \in \R-\{0\}$ where $s_1,\ldots,s_k$ are simples in $\cat{A}$, that $\mathrm{Im}\, \mathcal{Z}(s) >0$ for any other simple $s$ in $\cat{A}$ and that $\mathcal{Z}(a) \neq 0$ for any $a$ which is semi-stable for a sequence of stability conditions in $U(\cat{A})$ whose central charges limit to $\mathcal{Z}$. 

Start at $\cat{A}$ and repeatedly right tilt at simples $s$ for which $\mathcal{\mathcal{Z}}(s)>0$. By the same argument as in the proof of Corollary~\ref{intersection corollary converse} this process terminates at a heart $\cat{B} = R_\cat{T}\cat{A}$ where 
$\cat{T}$ is a torsion theory in $\cat{A}$. Then $\mathcal{Z}$ is the limit of central charges of stability conditions in $U(\cat{B})$.  This implies $\mathrm{Im}\, \mathcal{Z}(b) \geq 0$ for any $b\in \cat{B}$. If in addition $b$ is simple then we must have $\mathcal{Z}(b) \in \U \cup\{0\}$, otherwise we could right tilt at $b$. It follows that $\mathcal{Z}(b) \in \U \cup\{0\}$ for any $b\in \cat{B}$.

Recall that $\cat{B} = R_\cat{T}\cat{A} = \langle \cat{T}, \cat{F}[1] \rangle$ where $\cat{F} = \cat{T}^\perp$. It follows immediately that $\mathcal{Z}(t) \in \U \cup\{0\}$ for any $t\in \cat{T}$ and $\mathcal{Z}(f[1])=-\mathcal{Z}(f) \in \U \cup \{0\}$ for any $f\in F$. On the other hand $\mathrm{Im}\, \mathcal{Z}(f) \geq 0$ because $f\in \cat{A}$ so $\mathcal{Z}(f) \in \R_{\geq 0}$. 

We now show $\mathcal{Z}(b)\in \U$ for any $b\in \cat{B}$. It then follows that $\mathcal{Z}$ is the central charge of a stability condition in $U(\cat{B})$ and that this stability condition is in the boundary of $\overline{U(\cat{A})}$, \ie this degeneration of the central charge corresponds to a degeneration of stability conditions as claimed. 

First suppose $b\in \cat{T}$ and $\mathcal{Z}(b)=0$. Consider a short exact sequence
$$
0\to a\to b\to a' \to 0
$$
in $\cat{A}$. Since $b$ is torsion so is $a'$. It follows that $\mathcal{Z}(a')\in \R_{\leq 0}$ and $\mathcal{Z}(a)\in \R_{\geq 0}$. In fact we must have $\mathcal{Z}(a')=0$ for some quotient $a'$. If not, then $\mathcal{Z}(a')<0$ and $\mathcal{Z}(a)>0$ for all such short exact sequences and this implies that $b$ is semi-stable for all stability conditions in $U(\cat{A})$ whose central charges are close to $\mathcal{Z}$. By assumption $\mathcal{Z}(b)\neq 0$ in this situation, which is a contradiction. Therefore, we can choose a quotient $a'$ with $\mathcal{Z}(a')=0$ and repeat the argument. After finitely many steps we obtain a simple quotient $s$ with $\mathcal{Z}(s)=0$ which again contradicts the assumptions on $\mathcal{Z}$. We conclude that $\mathcal{Z}(b) \neq 0$.

The second case is when $b\in \cat{F}[1]$ and $\mathcal{Z}(b)=0$. A similar argument, involving passing to subobjects rather than quotients, leads to a contradiction. Hence $\mathcal{Z}(b)\neq 0$ when $b$ is in $\cat{T}$ or in $\cat{F}[1]$. The result follows. 
\end{proof}

This gives us a good understanding of the closure $\overline{U(\cat{A})}$. It also follows from the proof that $\overline{U(\cat{A})} \subset \bigcup_{i\in I} U(\cat{A}_i)$ and hence that
$$
\bigcup_{i\in I} \overline{U(\cat{A}_i)} = \bigcup_{i\in I} U(\cat{A}_i).
$$
\begin{theorem}
\label{component thm}
Under Assumption~\ref{assumption2} the union $\bigcup_{i\in I} U(\cat{A}_i)$ is a component of the space of stability conditions.
\end{theorem}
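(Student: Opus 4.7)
The plan is to verify that $U := \bigcup_{i\in I} U(\cat{A}_i)$ is non-empty, connected, open, and closed in $\stab{\cat{C}}$; being a connected clopen subset, it is then automatically a connected component.

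Non-emptiness is immediate. For connectedness, each $U(\cat{A}_i)\cong \U^n$ is path-connected, and by construction any $\cat{A}_j$ is obtained from $\cat{A}$ by a finite chain of simple tilts. Proposition~\ref{tiling} says consecutive tiles in such a chain share a codimension-one stratum in their closures, so a path passing through a point of this common face connects $U(\cat{A}_i)$ to $U(L_s\cat{A}_i)$. Induction on the length of the chain then connects any two tiles in the collection.

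For openness, I take $\sigma\in U(\cat{A}_i)$ and use the local-homeomorphism property: a sufficiently small perturbation $\tau$ of $\sigma$ has central charge close to that of $\sigma$, with the phase of each simple in $\cat{A}_i$ moving only slightly. Either all simple phases of $\cat{A}_i$ remain in $(0,1]$, in which case $\tau \in U(\cat{A}_i)$, or some set of simples $\{s_1,\ldots,s_k\}$ whose phases lay on the boundary of $(0,1]$ at $\sigma$ now have phases just outside $(0,1]$. By Corollary~\ref{intersection corollary converse} (applied locally at $\sigma$), any such $\tau$ lies in $U(L_\cat{T}\cat{A}_i)$ or $U(R_\cat{T}\cat{A}_i)$ for some torsion theory $\cat{T} \subset \langle s_1,\ldots,s_k\rangle$ in $\cat{A}_i$. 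Under Assumption~\ref{assumption2}, $\cat{T}$ contains only finitely many indecomposables, so Lemma~\ref{factorisation} expresses this tilt as a finite composition of simple tilts. Hence the new heart is in $\{\cat{A}_j\}_{j\in I}$ and $\tau \in U$. The same reasoning shows that near any $\sigma \in U$ only finitely many tiles $U(\cat{A}_j)$ are met, since $\cat{A}_i$ has only finitely many torsion theories by Assumption~\ref{assumption2}.

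For closedness I would argue that the complement is open by the symmetric version of the previous step: if $\tau \in U(\cat{B})$ with $\cat{B}\notin \{\cat{A}_i\}$, then near $\tau$ every tile encountered is obtained from $\cat{B}$ by a finite composition of simple tilts (via the same local application of Corollary~\ref{intersection corollary converse} and Lemma~\ref{factorisation}). If any such nearby heart were some $\cat{A}_j$, then $\cat{B}$ itself would differ from $\cat{A}_j$ by a finite chain of simple tilts, forcing $\cat{B}\in \{\cat{A}_i\}$, a contradiction. So a neighborhood of $\tau$ misses $U$. Alternatively, local finiteness of the stratification combined with the containment $\overline{U(\cat{A}_i)}\subset U$ (noted in the paragraph preceding the theorem) lets one conclude that convergent sequences in $U$ have their limit in some $\overline{U(\cat{A}_j)}\subset U$.

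The main obstacle is the openness step at boundary points of some $U(\cat{A}_i)$: one must control \emph{all} tiles reachable by small perturbations, not just those reached by a single simple tilt. The key ingredient is Lemma~\ref{factorisation}, which together with the finiteness of indecomposables in Assumption~\ref{assumption2} ensures that every torsion theory involved factors through simple tilts — without this, one could escape the collection $\{\cat{A}_i\}$ through a tilt at an infinite torsion theory, and the argument would break down (as indeed happens in the coherent-sheaves example of \S\ref{coherent}).
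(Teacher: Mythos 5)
The overall strategy (open + closed + connected) is the right shape, and you correctly identify Assumption~\ref{assumption2} and Lemma~\ref{factorisation} as essential ingredients, but there is a genuine gap in the openness step. You claim that any stability condition $\tau$ sufficiently close to $\sigma\in U(\cat{A}_i)$ must lie in some $U(\cat{B})$ with $\cat{B}$ a tilt of $\cat{A}_i$, and justify this by ``applying Corollary~\ref{intersection corollary converse} locally at $\sigma$.'' But that corollary only tells you which hearts $\cat{B}$ satisfy $\sigma\in\overline{U(\cat{B})}$; it says nothing a priori about the hearts of stability conditions in a small ball around $\sigma$. To invoke it you would already need to know that the heart of $\tau$ satisfies $\sigma\in\overline{U(\cat{B})}$ whenever $\tau$ is near $\sigma$, which is precisely the local-finiteness statement under contention. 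Nothing in the local homeomorphism onto central charges prevents, in principle, a sequence $\tau_n\to\sigma$ running through infinitely many distinct hearts $\cat{B}_n$, none of whose closures contain $\sigma$.

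The paper closes this gap by working directly with the generalised metric. It fixes the \emph{uniform} radius $1/2$ and takes any $\rho$ with $d(\rho,\sigma)<1/2$ and heart $\cat{B}$. Using Lemma~\ref{stab-fam-tor} it builds the torsion theory $\cat{T}=\langle b\in\cat{B}\ |\ \varphi^-_\rho(b)>1/2\rangle$ in $\cat{B}$ and sets $\cat{B}'=L_\cat{T}\cat{B}$; by construction $\varphi^\pm_\rho\in(-1/2,1/2]$ on $\cat{B}'$, so the metric bound forces $\varphi^\pm_\sigma\in(-1,1]$ on $\cat{B}'$, hence $\cat{D}_{\cat{A}_i}\subset\cat{D}_{\cat{B}'}\subset\cat{D}_{\cat{A}_i}[-1]$ and $\cat{B}'$ is a torsion tilt of $\cat{A}_i$. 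Thus $\cat{B}$ is obtained from $\cat{A}_i$ by a left tilt followed by a right tilt, each of which factors through simple tilts by Lemma~\ref{factorisation} under Assumption~\ref{assumption2}, placing $\cat{B}$ in $\{\cat{A}_j\}$. Because the radius $1/2$ is independent of $\sigma$, this also gives closedness for free: any point of $\overline{\bigcup_i U(\cat{A}_i)}$ is within distance $1/2$ of some $\sigma$ in the union, hence already in the union. Your alternative closedness argument (showing the complement is open by working near a $\tau\in U(\cat{B})$, $\cat{B}\notin\{\cat{A}_i\}$) runs into a further problem: there is no reason such a $\cat{B}$ should itself satisfy the finiteness hypotheses, so Lemma~\ref{factorisation} cannot be applied on its side.
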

\begin{proof}
We show that $\bigcup_{i\in I} U(\cat{A}_i)$ is both open and closed as a subset of the space of stability conditions. Let $\sigma\in U(\cat{A}_i)$ and consider the ball $B(\sigma,1/2)$. Suppose $B(\sigma,1/2) \cap U(\cat{B})\neq \emptyset$, say $\rho$ is a stability condition in the intersection. Consider the torsion theory (recall Lemma~\ref{stab-fam-tor}) 
$$
\cat{T} = \langle b\in \cat{B} \ | \ \varphi_\rho^-(b) >  1/2\rangle
$$
in $\cat{B}$ with free theory $\cat{F} = \langle b\in \cat{B} \ | \ \varphi_\rho^+(b)\leq 1/2\rangle$. Let $\cat{B}'=L_\cat{T}\cat{B} = \langle \cat{F}, \cat{T}[-1]\rangle$. It is clear that  $\varphi^\pm_\rho(b')\in (-1/2,1/2]$ for any $0\neq b'\in \cat{B}'$. Since $d(\rho,\sigma)<1/2$ we have
$$
-1 < \varphi^-_\sigma(b') \leq \varphi^+_\sigma(b') \leq 1
$$
for any $0\neq b'\in\cat{B}'$. Hence $\cat{B}' \subset \cat{D}_\cat{A}[-1]\cap\cat{D}_\cat{A}^\perp[1]$, or equivalently $\cat{D}_\cat{A} \subset \cat{D}_{\cat{B}'} \subset \cat{D}_\cat{A}[-1]$, and there is a torsion theory $\cat{T}'$ in $\cat{A}$ for which $\cat{B}' = L_{\cat{T}'}\cat{A}$. Therefore
$$
\cat{B} = R_{\cat{T}[-1]}L_{\cat{T}'}\cat{A}
$$
is obtained from $\cat{A}$ by tilting twice, first left and then right. (Of course one or both of these tilts may be trivial.) Under Assumption~\ref{assumption2} it follows that $\cat{B}=\cat{A}_i$ for some $i$. So 
$$
B(\sigma,1/2) \subset \bigcup_{i\in I} U(\cat{A}_i)
$$
and in particular $\bigcup_{i\in I} U(\cat{A}_i)$ is open. Furthermore if $\tau \in \overline{\bigcup_{i\in I} U(\cat{A}_i) }$ then  $\tau$ is in $B(\sigma,1/2)$ for some $\sigma$. By the above, it is in $\bigcup_{i\in I} U(\cat{A}_i)$ which is therefore closed.
\end{proof}
\begin{remark}
Assumption~\ref{assumption2} guarantees that the tiling of the component containing stability conditions with heart $\cat{A}$ by the subsets $U(\cat{A}_i)$ is locally-finite.
\end{remark}

The results of this section depend upon Assumption~\ref{assumption1} and, for the most part, on the stronger  Assumption~\ref{assumption2}. We make some remarks on two special situations in which the property of having finitely many indecomposables, or the weaker property of having finite length and finitely many simples are inherited by a tilted heart. We will see examples of each in the next section. For the remainder of this section we assume that $\cat{C}$ is $k$-linear for some field $k$ and of finite type, \ie $\bigoplus_i \Ext{\cat{C}}{i}{c}{c'}$ is finite dimensional for any objects $c$ and $c'$. 

\label{spherical remarks}
The first special situation arises when the simple $s$ is a \defn{spherical object}. This means that there is some $d\in \N$, the \defn{dimension} of the spherical object $s$, such that $\Ext{\cat{C}}{i}{s}{s}\cong k$ if $i=0$ or $d$ and vanishes otherwise, and that for any $c\in \cat{C}$ the pairing 
$$
\Ext{\cat{C}}{i}{s}{c}\otimes \Ext{\cat{C}}{d-i}{c}{s} \to \Ext{\cat{C}}{d}{s}{s} \cong k
$$
is perfect. A spherical object defines an automorphism, often called a twist, $\Phi_s$ of $\cat{C}$ characterised by the triangle
\begin{equation}
\label{twist formula}
\Ext{\cat{C}}{*}{s}{c}\otimes s \to c\to \Phi_s(c) \to \Ext{\cat{C}}{*}{s}{c}\otimes s[1],
\end{equation}
where the first map is evaluation, see \cite{MR1831820}. Suppose that $d\neq 1$ and that for $i\neq 1$
 $$
\Ext{\cat{C}}{i}{s}{s'}=0
$$
for any simple $s'\neq s$ in $\cat{A}$. Then by considering the long exact sequence obtained from (\ref{twist formula}) upon applying $\Ext{\cat{C}}{i}{s}{-}$ we see that $\Ext{\cat{C}}{*}{s}{\Phi_s(s')}=0$ for $i=0$ and $1$. Hence, by Lemmas~\ref{simplicity criterion 1} and~\ref{simplicity criterion 2}, we have $\Phi_s(s')=L_ss' $. Furthermore $\Phi_s(s) = s[1-d]$ so that when $d=2$ the tilted heart $L_s\cat{A}$ is the image of $\cat{A}$ under the automorphism $\Phi_s$. Thus it inherits all the properties of $\cat{A}$, in particular if $\cat{A}$ has finitely many indecomposables then so does $L_s\cat{A}$. Clearly we can repeat this construction starting from the tilted heart with $s[-1]$ in place of $s$.  

\label{excellent remarks}
The second special situation arises when we have an \defn{excellent collection} in the sense of \cite[\S 3]{MR2142382} (where the full details of what follows can be found). This is a full, strong exceptional collection $(e_0, \ldots, e_{n-1})$ of objects in $\cat{C}$ with an additional property that ensures that all mutations of the collection are also excellent. It follows that the braid group acts on the set of excellent collections of length $n$ in $\cat{C}$ by mutations. Each excellent collection in $\cat{C}$ determines a faithful $t$-structure whose heart we denote $\cat{A}(e_0, \ldots, e_{n-1})$. This heart has finite length and precisely $n$ simples, which have a canonical ordering, say $s_0,\ldots, s_{n-1}$ in which $\Ext{\cat{C}}{i}{s_j}{s_k}=0$ unless $j-k=i \geq 0$. Proposition 3.5 of \cite{MR2142382} states that, for $1\leq i \leq n-1$,
$$
L_{s_i}\cat{A}(e_0, \ldots, e_{n-1}) = \cat{A}\left(\sigma_i(e_0, \ldots, e_{n-1})\right)
$$
where $\sigma_i$ is the $i$th standard generator of the braid group. In particular the tilted heart $L_{s_i}\cat{A}(e_0, \ldots, e_{n-1})$ also arises from an excellent collection and so we can repeat the process.  Note, that this theory tells us nothing about the tilt at the first simple $s_0$, so that it only guarantees a partial version of Assumption~\ref{assumption1} (and does not help in verifying Assumption~\ref{assumption2} at all). A more refined situation is discussed in \cite[\S4]{MR2142382}, in which one has control over tilts at all simples so that Assumption~\ref{assumption1} holds, but this does not apply to either of the examples in the next section.

\section{Examples}
\label{examples}

Our main example, in \S\ref{constructible}, is the constructible derived category of $\P^1$ stratified by a point and its complement. The perverse sheaves are the heart of a $t$-structure on this category which satisfies Assumption~\ref{assumption2}, although this is only apparent after some computation. We therefore obtain a combinatorial description of a component of the space of stability conditions as a locally-finite tiling by subsets $U(\cat{A})\cong \U^2$. This description enables us to show that this component is isomorphic to $\C^2$ as a complex manifold, and to compute the subgroup of automorphisms of the category which fix this component.

As a counterpoint we consider stability conditions on the coherent derived category of $\P^1$ in \S\ref{coherent}. The space of these was computed in \cite{MR2219846} and is also isomorphic to $\C^2$. The Kronecker heart satisifes Assumption~\ref{assumption1} but not the stronger Assumption~\ref{assumption2}. The simple tilting process starting from this heart leads to a tiling of a dense subset of the space of stability conditions, but not of an entire component. The tiling is very different from that for the constructible sheaves, for example it is not locally-finite. This is a reflection of the fact that the Kronecker heart is of tame type (\ie has one-dimensional families of indecomposables), rather than of finite type as are the hearts arising in the constructible derived category. 

\subsection{Constructible sheaves on $\P^1$}
\label{constructible}

Let $X$ be $\P^1$ stratified by a point $x$ and its complement $U$ and let $\constr{X}$ be the constructible (with respect to this stratification) derived category of 
%added post-acceptance
sheaves of complex vector-spaces on
 $X$. The first task is to construct some $t$-structures on $\constr{X}$. 
Both strata are simply-connected so the only $t$-structures on $\constr{x}$ and $\constr{U}$ are the usual ones (with heart the local systems) and their shifts. Let $\imath:x \hookrightarrow X$ and $\jmath: U \hookrightarrow X$ be the inclusions. Then there are functors
$$
\constr{x} \stackrel{\imath_*}{\longrightarrow} \constr{X} \stackrel{\jmath^*}{\longrightarrow} \constr{U} 
$$
with respective left and right adjoints $\imath^*$ and $\imath^!$ and $\jmath_!$ and $\jmath_*$ obeying the usual identities, see \eg \cite[\S1.4]{bbd}. Thus we have `gluing data' and given a \defn{perversity}, \ie a pair $(m,n) \in \Z^2$ indexing $t$-structures on $\constr{U}$ and $\constr{x}$, we can construct a $t$-structure on $\constr{X}$ with
$$
\constr{X}^{\leq 0} = \{ c \ | \ H^k(\jmath^*c)=0 \ \textrm{for} \ k>-m \ \textrm{and}\  H^k(\imath^*c)=0 \ \textrm{for} \ k>-n \}
$$
see \cite[\S1.4.9]{bbd}. Denote the heart of this $t$-structure by $\perv{n-m+1}{n}{}$. Verdier duality acts on these $t$-structures via $(m,n) \mapsto (2-m,-n)$ so that $D\perv{r}{s}{} = \perv{-r}{-s}{}$.  The category $\perv{r}{s}{}$ has finite length and two simple objects (one for each stratum). The simple objects in $\perv{r}{s}{}$ are $\C_x[s]$ and
\begin{eqnarray*}
\jmath_!\C_U[s-r+1] & \textrm{if} &  r>0\\
\C_X[s-r+1]& \textrm{if} & r=0 \\
\jmath_*\C_U[s-r+1]& \textrm{if} & r<0.
\end{eqnarray*}
For $-1 \leq r \leq 1$ this follows from \cite[Chapter 7, Proposition 1.10.1]{gema}. For $r\geq 2$ if $a\in \perv{r}{s}{}$ then the cohomological vanishing conditions imply $\Ext{}{1}{\imath_*\imath^*a}{\jmath_!\jmath^!a}=0$ so that the triangle
$$
\jmath_!\jmath^!a \to a \to \imath_*\imath^*a\to \jmath_!\jmath^!a[1]
$$
splits to give $a \cong \jmath_!\jmath^!a \oplus \imath_*\imath^*a$. It is then clear that the simple objects are as described. A Verdier dual argument applies in the case $r\leq -2$.

The category $\perv{r}{s}{}$ is semi-simple if $|r|>1$. When $|r|=1$ the heart is (up to a shift) either the category of constructible sheaves on $X$ (when $r=1$) or its Verdier dual (when $r=-1$). In either case the heart is equivalent to the representation category of the quiver $\cdot \to \cdot$ with two vertices and one arrow. The interesting case (the only one in which the $t$-structure is faithful) is when $r=0$ in which the heart is, up to a shift, the category $\cat{P}^0$ of perverse sheaves on $X$. This is equivalent to the category of representations of the quiver with relations $Q$: 
\begin{equation}
\label{quiver}
\xymatrix{\cdot \ar@/{}_{.5pc}/[r]_c& \cdot \ar@/{}_{.5pc}/[l]_v} \qquad vc=0,
\end{equation}
see, for example, \cite[Chapter 7, \S2.6]{gema}. The Auslander--Reiten quiver of the representation category is shown below. Its vertices are the (isomorphism classes of) the indecomposable representations and its arrows correspond to irreducible maps between them, \ie to maps which cannot be written as composites except where one factor is an isomorphism. The dotted lines indicate the almost-split short exact sequences. See \cite[Chapter VII]{MR1314422} for more details on Auslander--Reiten quivers. 
$$
\xymatrix{
&& p \ar[dr] &&\\
& \jmath_! \C_U[1] \ar[dr] \ar[ur] \ar@{--}[rr] && \jmath_*\C_U[1] \ar[dr] &\\
\C_x \ar[ur] \ar@{--}[rr]&& \C_X[1] \ar[ur]  \ar@{--}[rr]&& \C_x.
}
$$
The simple object $\C_X[1]$ is spherical of dimension $2$, and furthermore $$\ext{i}{\C_X[1]}{\C_x} = 0 = \ext{i}{\C_x}{\C_X[1]}$$ unless $i=1$. Hence, by the remarks on page \pageref{spherical remarks}, the twist $\Delta=\Phi_{\C_X[1]}$ satisfies
$$
\Delta \cat{P}^0 = L_{\C_X[1]} \cat{P}^0.
$$
\begin{remark}
We write $\Delta$ for this twist because geometrically it arises from a Dehn twist. Nadler and Zaslow \cite{Nadler:2006kx} construct an equivalence
$$
\constran{M} \simeq \derfuk{T^*M}
$$ 
between the real-analytically constructible derived category of a manifold $M$ and the derived asymptotic Fukaya category of its cotangent bundle. (The objects of the asymptotic Fukaya category are Lagrangians which are Legendrian at $\infty$, \ie asymptotic to a union of cotangent fibres, hence the name.) The equivalence takes a constructible sheaf to a Lagrangian smoothing of its characteristic cycle. Taking $M=X$ we can restrict to obtain an equivalence between $\constr{X}$ and the full subcategory of $\derfuk{T^*\P^1}$ generated by the cotangent fibre $T_x^*\P^1$ and zero section, which are the characteristic cycles of $\C_x$ and $\C_X$ respectively. Dehn twisting about the zero section gives an automorphism of $\derfuk{T^*\P^1}$. Algebraically this automorphism is given by twisting about the corresponding object $\C_X[1]$ in $\constr{X}$. 
\end{remark}

By direct computation we can check that
\begin{enumerate}
\item $\Delta$ has infinite order;
\item $D\Delta D \cong \Delta^{-1}$;
\item $\Delta\perv{-r}{-r}{} = \cat{P}^{r}$ when $r>0$.
\end{enumerate}
Using these properties we can show that the set of hearts $ \Delta^n\perv{r}{s}{}$ for $n,r,s, \in \Z$ is closed under simple tilts. All of these hearts are distinct, except for those identified by the third property.  Each of these hearts is either semi-simple, isomorphic to the constructible sheaves or to the perverse sheaves. Thus Assumption~\ref{assumption2} is satisfied. By Theorem~\ref{component thm} this is a list of the hearts of stability conditions in a component of the space of stability conditions.

\label{diagram explanation}
We describe the closure $\overline{U(\cat{P}^0)}$ of the set of stability conditions with heart the perverse sheaves (other cases are similar). The strata of codimension $\leq 1$ are described by Proposition~\ref{tiling};  we focus on the strata of codimension $2$. By Proposition~\ref{constraints} points of these correspond to degenerations $\mathcal{Z}$ of the central charge for which the charges of both simples become real, and the charge of no semi-stable vanishes. These limiting central charges correspond to points in the real plane --- coordinates $\mathcal{Z}(\C_x)$ and $\mathcal{Z}(\C_X[1])$ --- with certain points deleted, see Figure~\ref{bdy picture}. The deleted points form half-lines; we label these with the object which is semi-stable for stability conditions with nearby central charges and whose charge vanishes on that half-line. 

Each of the codimension $2$ strata in $\overline{U(\cat{P}^0)}$ is the (unique) codimension $2$ stratum in $U(\cat{B})$ for some right tilt $\cat{B}=R_\cat{T}\cat{P}^0$, and each such right tilt occurs. In Figure~\ref{bdy picture} the half-lines divide the plane into regions corresponding to codimension $2$ strata and each is labelled by the corresponding right tilt. Hearts labelling adjacent regions are related by simple tilts --- the transition in the direction of the arrows is given by right tilting at the object labelling the corresponding half-line. In particular the two simples in the heart corresponding to a region are the two objects labelling the bounding half-lines, but with objects on inward pointing arrows shifted by $[1]$. Explicit stability conditions with central charges and hearts as indicated can be constructed, thus verifying the results of \S\ref{main results} in this case. 

\begin{figure}[htbp]
\begin{center}
\includegraphics[width=.9\linewidth]{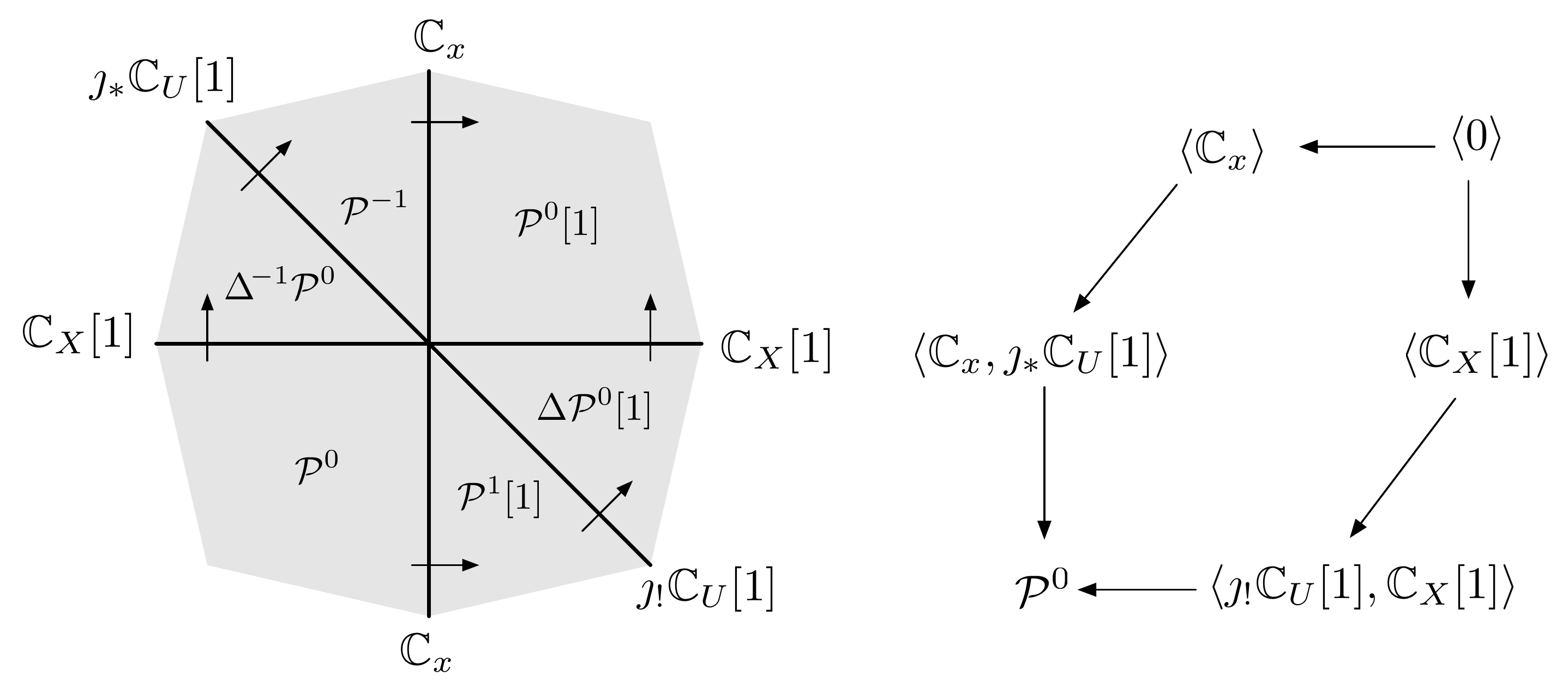}
\caption{The left-hand diagram shows the codimension $2$ strata in $\overline{U(\cat{P}^0)}$. The right-hand diagram shows the torsion theories in $\cat{P}^0$.  }
\label{bdy picture}
\end{center}
\end{figure}

A combinatorial description of the strata in the component $\stabo{\constr{X}}$ containing $U(\cat{P}^0)$ of the space of stability conditions on $\constr{X}$ is shown in Figure~\ref{graph}. 
\begin{theorem}
\label{c2thm}
There is a free action of $\C\times \langle \Delta\rangle \cong \C\times \Z$ on the component $\stabo{\constr{X}}$ of the space of stability conditions, where $\C$ and $\Delta$ act as described on p\pageref{c action}. The quotient is isomorphic to $\C^*$ and $\stabo{\constr{X}} \cong \C^2$.
\end{theorem}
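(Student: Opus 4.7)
The plan is to construct an explicit isomorphism $\stabo{\constr{X}} \cong \C^2$ and identify the $\C\times\langle\Delta\rangle$-quotient as $\C^*$ using the tile structure of $\stabo{\constr{X}}$ from \S\ref{constructible} and the three properties of $\Delta$ listed immediately before the theorem.

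First I verify that the action is free and that the two factors commute. The $\C$-action on any stability manifold is free: any $\lambda$ fixing $\sigma$ forces $\exp(i\pi\lambda)=1$, hence $\lambda\in 2\Z$, and the residual shift $[\lambda]$ must preserve every semi-stable subcategory; since $\C_x[\lambda]\not\cong\C_x$ for $\lambda\neq 0$, this forces $\lambda=0$. The $\langle\Delta\rangle$-action is free: $\Delta$ has infinite order (the first listed property), and any $\Delta^n$ fixing $\sigma\in U(\cat{A}_i)$ must satisfy $\Delta^n\cat{A}_i = \cat{A}_i$, which combined with the enumeration of hearts $\{\Delta^n\perv{r}{s}{}\}$ modulo the identifications of the third listed property forces $n=0$. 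The two actions commute because the $\C$-action is a formal reparametrisation of the slicing, independent of any categorical automorphism.

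Second, I identify the orbit space. Since $\C$ acts freely and properly, $\stabo{\constr{X}}\to \stabo{\constr{X}}/\C$ is a holomorphic principal $\C$-bundle. On each tile $U(\cat{A}_i)\cong \U^2$, the $\C$-invariant $\log(\mathcal{Z}(s_1)/\mathcal{Z}(s_2))$ of the two simples identifies $U(\cat{A}_i)/\C$ with an open horizontal strip of width $2\pi$ in $\C$. The gluing maps between tiles --- affine in this coordinate, read off Proposition~\ref{tiling} and the unipotent change-of-basis matrix for simples --- assemble these strips, using the locally-finite tiling from Theorem~\ref{component thm} (provided by Assumption~\ref{assumption2}) and the graph in Figure~\ref{graph}, into a biholomorphism $\stabo{\constr{X}}/\C\cong\C$. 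The induced $\langle\Delta\rangle$-action on this $\C$ is a translation (by direct computation of the action of $\Delta$ on the charges of $\C_x$ and $\C_X[1]$, using the second and third listed properties) of infinite order (by the first), so $(\stabo{\constr{X}}/\C)/\langle\Delta\rangle\cong\C/\Z\cong\C^*$ via the exponential map.

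Finally, the principal $\C$-bundle $\stabo{\constr{X}}\to\C$ over a contractible base is holomorphically trivial, whence $\stabo{\constr{X}}\cong \C^2$. The main obstacle is the global identification $\stabo{\constr{X}}/\C\cong\C$: one must show that the strips glue without monodromy, which relies on the explicit combinatorics of the tiling in Figure~\ref{graph} together with the third listed property (which pins down the identifications among hearts and hence the adjacency structure of tiles) and Assumption~\ref{assumption2} (ensuring the gluing is locally finite).
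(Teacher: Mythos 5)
Your overall architecture is different from the paper's, and the difference matters: you want to establish $\stabo{\constr{X}}/\C\cong\C$ \emph{first} by gluing strips, and then quotient by $\Delta$ to get $\C^*$. The paper goes the other way: it constructs an explicit fundamental domain for the \emph{full} $\C\times\langle\Delta\rangle$-action, cut out by three conditions (that $\C_x$ and $\jmath_!\C_U[1]$ are semi-stable, a normalisation of $\mathcal{Z}(\C_x)$ and $\varphi(\C_x)$, and a range condition on $\varphi(\jmath_!\C_U[1])$ and $m(\jmath_!\C_U[1])$), checks carefully that every orbit meets this set exactly once, and maps it biholomorphically to $\C^*$ via an explicit chart in the coordinate $\log m(\jmath_!\C_U[1])+i\pi\varphi(\jmath_!\C_U[1])$. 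Only then does it recover $\stabo{\constr{X}}/\C\cong\C$, and this step is \emph{free}: a connected cover of $\C^*$ on which $\Z$ acts by deck transformations must be the universal cover. The reason this ordering is preferable is precisely the step you flag as "the main obstacle." Showing directly that the union of $\C$-images of the $U(\cat{A}_i)$ is all of $\C$ with no monodromy requires controlling how the countably many "sheets" in Figure~\ref{graph} are joined along the spine, and your sketch stops exactly at the point where that accounting would have to be done. That is a genuine gap, not a routine verification: without it you cannot rule out, for example, the quotient being a non-simply-connected Riemann surface or a half-plane rather than $\C$.

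Two smaller points. First, your freeness argument for $\C$ is slightly misstated: you don't need to reason about shifts of $\C_x$; once $\exp(i\pi\lambda)=1$ and $\mathrm{Re}\,\lambda=0$ you are already done, and if $\mathrm{Re}\,\lambda=2n\neq 0$ the heart shifts. (The paper simply invokes that the $\C$-action is free in general.) Second, be careful with the phrase "$U(\cat{A}_i)/\C$": the $\C$-action does not preserve tiles, so what you are really parameterising with $\log(\mathcal{Z}(s_1)/\mathcal{Z}(s_2))$ is the image of $U(\cat{A}_i)$ in the quotient, and you should verify that this coordinate is in fact injective on that image (it is, but this needs a sentence). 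None of these smaller issues is fatal, but the missing global gluing argument is; either carry it out, or invert the order of quotients as the paper does, where the $\C^*$ identification does all the work and the $\C$ identification follows formally.
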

\begin{proof}
The method is similar to that in \cite[\S 4]{MR2219846}. Since  $\Delta$ has infinite order and does not preserve any of the hearts it acts freely. The action of $\C$ is free and preserves semi-stables, whereas $\Delta$ changes them, so the action of $\C\times \langle\Delta\rangle$ is also free. Each orbit  contains a unique stability condition in the subset satisfying
\begin{enumerate}
\item $\C_x$ and $\jmath_!\C_U[1]$ are semi-stable;
\item $\mathcal{Z}(\C_x) = -1$ and $\varphi(\C_x) = 1$;
\item $\varphi(\jmath_!\C_U[1]) > 1$ or $\varphi(\jmath_!\C_U[1])=1$ and the mass $m(\jmath_!\C_U[1]) \in (0,1)$. 
\end{enumerate}
To see this note that any orbit  intersects $\bigcup_{i\in \N} U(\cat{P}^i)$.
For $i\geq 1$ both $\C_x$ and a shift of $\jmath_!\C_U[1]$ are simple in $\cat{P}^i$ and hence semi-stable. For the same reason $\C_x$ is semi-stable in $U(\cat{P}^0)$ but in this subset $\jmath_!\C_U[1]$ is semi-stable if and only if $$\varphi(\jmath_!\C_U[1]) \geq \varphi(\C_x).$$ However, if $\varphi(\jmath_!\C_U[1]) < \varphi(\C_x)$ then applying $\Delta$ we get a stability condition in $U(\Delta\cat{P}^0)$ for which $\varphi(\jmath_!\C_U[1]) > \varphi(\C_x)$ and it follows that $\C_x$ and $\jmath_!\C_U[1]$ are semi-stable. Thus each orbit contains a stability condition for which $\C_x$ and $\jmath_!\C_U[1]$ are semi-stable. Using the action of $\C$ we can rotate and rescale so that the second condition holds. 

Conversely, when the first condition is satisifed the heart of the stability condition must be one of the $\cat{P}^i$ for $i\geq 0$ or $\Delta \cat{P}^0$. Furthermore if it is $\cat{P}^0$ or $\Delta \cat{P}^0$ then 
$$
\varphi(\jmath_!\C_U[1]) \geq \varphi(\C_x)
$$
(otherwise $\jmath_!\C_U[1]$ is not semi-stable). It follows that $\varphi(\jmath_!\C_U[1])\geq 1$ when the first two conditions are satisifed. For phases $>1$ there are no restrictions. If the phase is $1$ then the heart is $\Delta \cat{P}^0$ when $m(\jmath_!\C_U[1])\in (0,1)$ and $\cat{P}^0$ when $m(\jmath_!\C_U[1])\in (1,\infty)$. The case $m(\jmath_!\C_U[1]) = 1$ does not occur for this implies $\mathcal{Z}(\C_X)=0$ which is forbidden since $\C_X$ is semi-stable. Furthermore, if $m(\jmath_!\C_U[1])=m\in (1,\infty)$ then acting by $(-\frac{1}{\pi i} \log(m),\Delta)$ we obtain a new stability condition, again satisfying the first two conditions and with $\varphi(\jmath_!\C_U[1])= 1$, but with $m(\jmath_!\C_U[1])=1/m \in (0,1)$. Hence, as claimed, each orbit contains a stability condition satisfying all three conditions. It is easy to check that no orbit meets the subset satisfying the three conditions in more than one point.

We identify $\stabo{\constr{X}} / \C \times \langle\Delta\rangle$ with $\C^*$ by first mapping a stability condition satisfying the three conditions to 
$$
\log m( \jmath_!\C_U[1] ) + i \pi \varphi(\jmath_!\C_U[1] ) 
\in \{ x+iy \in \C \ | \ y>  \pi \ \textrm{or}\  y=\pi, x<0 \}
$$
and then applying $z \mapsto (z-i\pi)^2$. This is holomorphic because the complex structure on  a space of stability conditions comes from that on the space of central charges. Since $\jmath_!\C_U[1-i]$ is simple in $\cat{P}^i$ when $i>0$ we can choose $\sigma\in U(\cat{P}^i)$ satisfying the three conditions for which the mass and phase of $\jmath_!\C_U[1]$ take any given values in $(0,\infty)$ and $(i,i+1]$ respectively. We can also find $\sigma \in U(\Delta\cat{P}^0)$ satisfying the three conditions such that $\jmath_!\C_U[1]$ has phase $1$ and any given mass in $(0,1)$. It follows that the above map is surjective. Furthermore, the induced gluing of the boundary is exactly that arising from the group action. 

Finally, since $\stabo{\constr{X}}/\C$ is connected it must be the universal cover of $\C^*$, with $\Delta$ acting as deck transformations, and is therefore isomorphic to $\C$. The result follows. 
\end{proof}

\begin{figure}[htbp]
\begin{center}
\includegraphics[width=2in, angle=-90]{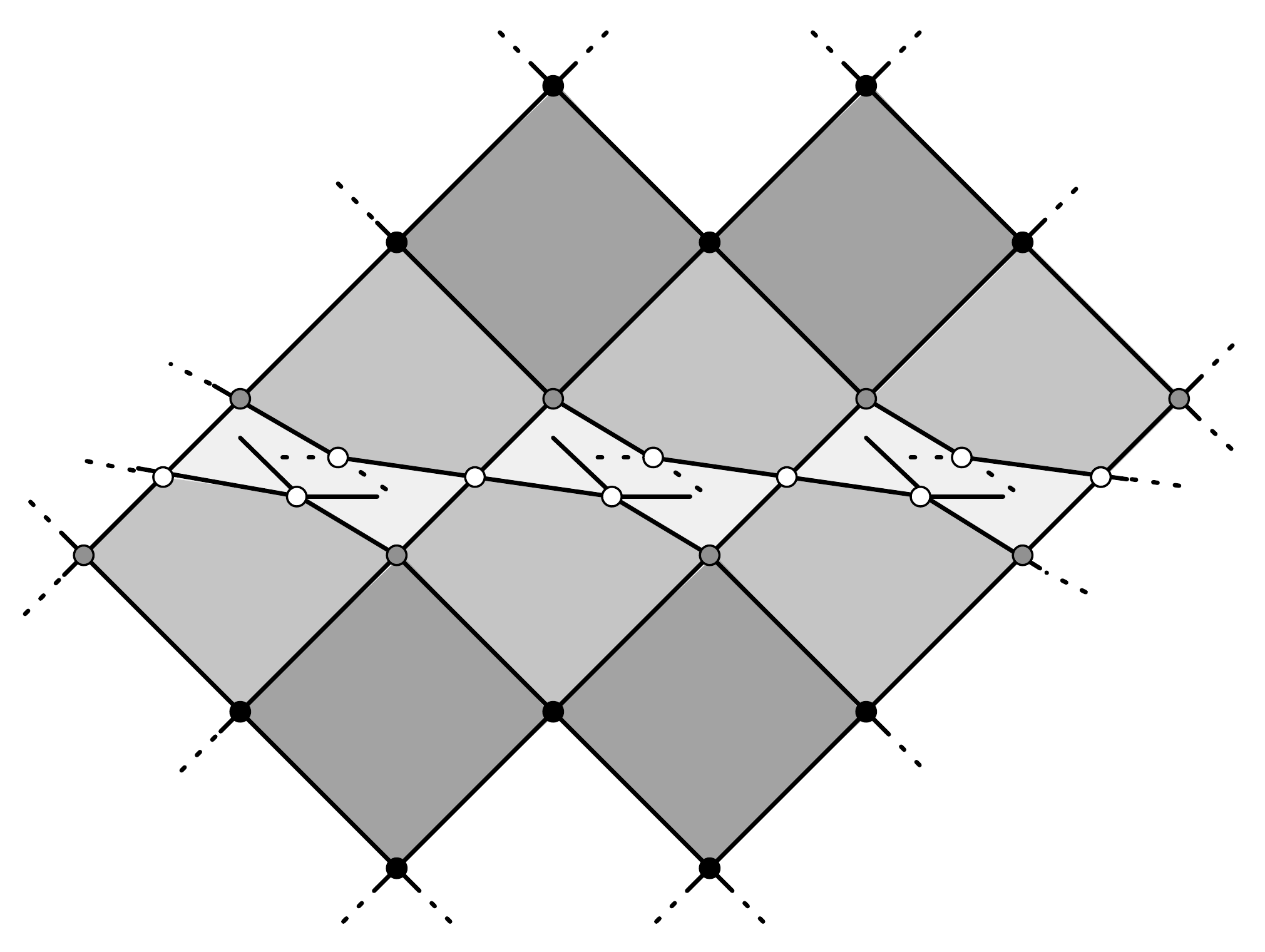}
\caption{The Poincar\'e dual to the stratification of $\stabo{\constr{X}}$. Vertices correspond to open strata, equivalently to the hearts of $t$-structures. Vertices labelled by white, grey and black dots correspond to hearts equivalent to the perverse sheaves, to the constructible sheaves and to a semi-simple category respectively. Adjacent vertices are related by simple tilts, equivalently if the corresponding subsets of stability conditions share a codimension $1$ boundary stratum. Moving right corresponds to tilting right, moving left to tilting left. The $2$-cells correspond to codimension $2$ strata in the closure of the strata corresponding to their vertices and edges. The diagram shows part of one `sheet' which extends to infinity as indicated. There are countably many such sheets, joined along the central `spine', attached so that one can pass from the lower part shown to the sheet above the upper part, and so on. }
\label{graph}
\end{center}
\vspace{.25in}
\begin{center}
\includegraphics[width=2.75in]{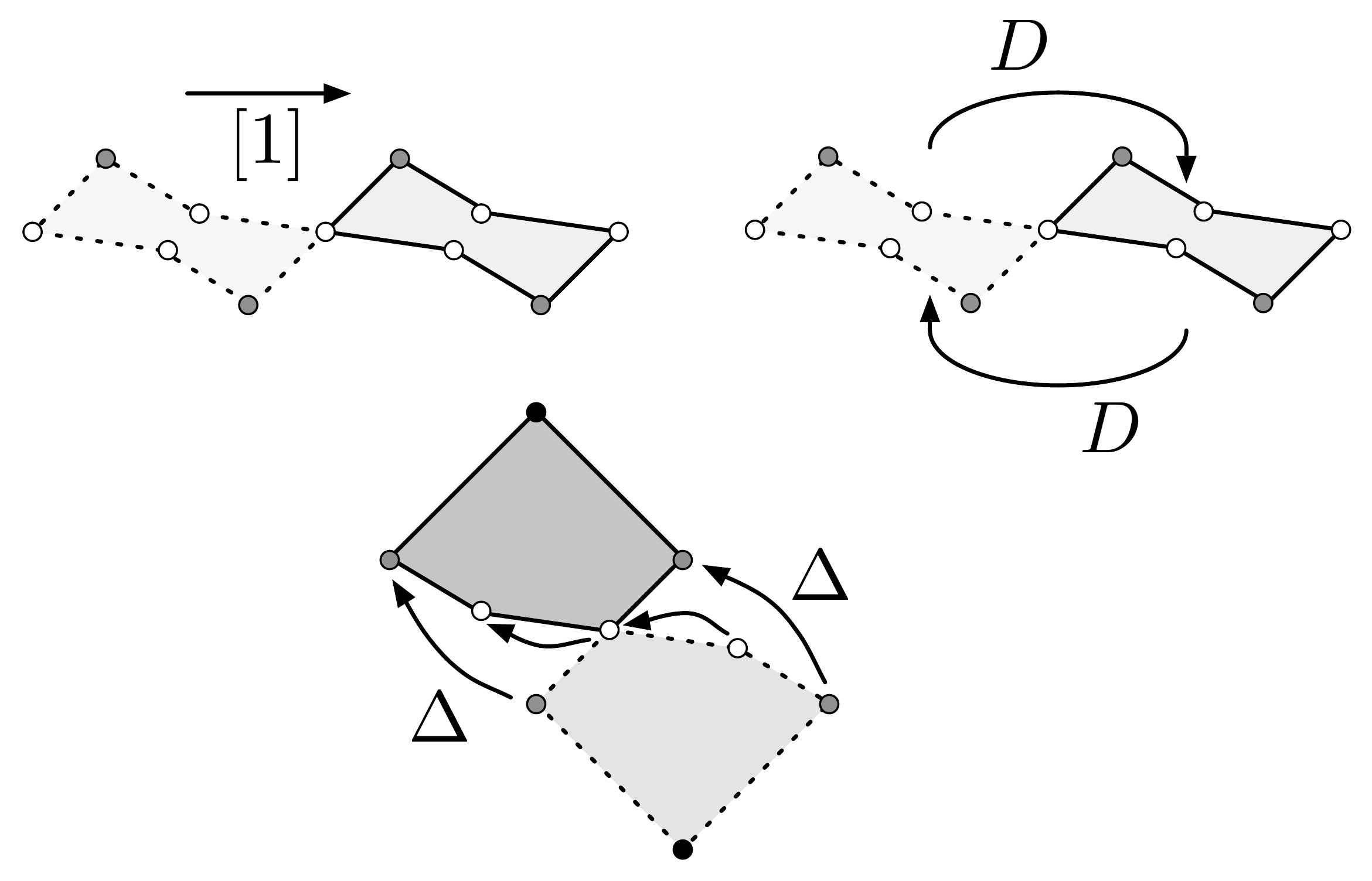}
\caption{The actions of shift, $\Delta$ and Verdier duality on Figure~\ref{graph}. Shift moves the diagram one place to the right. Duality rotates it by $\pi$ about the perverse sheaves, swapping the upper and lower layers, and $\Delta$ rotates by $\pi$ about the central spine and shifts by half a place to the left. Shift and $\Delta$ preserve tilting relationships whereas duality reverses them. }
\label{actions}
\end{center}
\end{figure}

As an application we determine the subgroup of the automorphism group of the category which preserves the component $\stabo{\constr{X}}$. (It is conjectured that spaces of stability conditions are connected; if this were known we could determine the entire automorphism group.) It is convenient to determine the subgroup of $\delta$-automorphisms first, \ie to allow functors such as $[1]$ as well as triangulated automorphisms. Suppose $\alpha$ is a $\delta$-automorphism. Since $\alpha$ must map $U(\cat{A})$ to $U(\alpha\cat{A})$ we can check from the combinatorial description that $\alpha\Delta^m[n]$ preserves the perverse sheaves for some $m$ and $n$. An automorphism which preserves the perverse sheaves induces an action on the Auslander--Reiten quiver. In this case it must act as the identity on the vertices, as there are no other symmetries; up to a natural isomorphism we may assume it fixes the indecomposable perverse sheaves. Since there is at most one irreducible map between each pair of indecomposables, the automorphism can only act by rescaling each of these maps by an element of $\C^*$. By a further natural isomorphism we can assume these rescalings are all trivial except for one, for concreteness say that of the map $\jmath_*\C_U[1] \to \C_x$. (We use the fact that the sum of the two composites from $\jmath_!\C_U[1]$ to $\jmath_*\C_U[1]$ is zero here.) Therefore the subgroup of automorphisms (up to natural isomorphism) which preserve the perverse sheaves is isomorphic to $\C^*$. Hence 
$$
\textrm{$\delta$-Aut}\, \constr{X} \cong \Z^2 \times \C^*
$$
with $\C^*$ acting trivially on the space of stability conditions and $\Z^2$ generated by $\Delta$ and $[1]$. The {\it bona fide} automorphisms are the subgroup $\Z \times 2\Z \times \C^*$. This description makes it easy to identify the Serre functor; it is $S= \Delta^{-2}$.

\subsection{Coherent sheaves on $\P^1$}
\label{coherent}

Let $\cat{D}(\P^1)$ be the coherent derived category of $\P^1$. The space of stability conditions was computed in  \cite{MR2219846} and is isomorphic to $\C^2$. We use the notion of excellent collection --- see the remarks on page \pageref{excellent remarks} and  \cite[\S3]{MR2142382} --- in this case $\mathcal{O}, \mathcal{O}(1)$. The corresponding heart  $\cat{A}(\mathcal{O}, \mathcal{O}(1))$ is equivalent to the representation category of the Kronecker quiver
\begin{equation}
\label{kronecker}
K= \xymatrix{\cdot \ar@/{}_{.5pc}/[r] \ar@/{}^{.5pc}/[r] & \cdot }
\end{equation}
and we refer to it as the Kronecker heart. It has two simple objects $\mathcal{O}$ and $\mathcal{O}(-1)[1]$ (in that order in the canonical ordering). Right tilting at the first leads to the heart corresponding to the mutated collection:
$$
\cat{A}(\mathcal{O}(1),\mathcal{O}(2)) = R_{\mathcal{O}} \cat{A}(\mathcal{O}, \mathcal{O}(1)).
$$
Write $\cat{A}_d$ for $\cat{A}(\mathcal{O}(d),\mathcal{O}(d+1))$. Repeating we see that
$\cat{A}_{d+1} = R_{\langle \mathcal{O}, \ldots,\mathcal{O}(d)\rangle} \cat{A}_0$. Hence by Corollary~\ref{intersection corollary}
$$
U(\cat{A}_d ) \cap \overline{U(\cat{A}_0)} \neq \emptyset
$$
and in fact is of (real) dimension two. This yields the picture of the (infinitely many) codimension two strata in $\overline{U(\cat{A}_0)}$ shown in Figure~\ref{kronecker bdy}. 
 
\begin{figure}[htbp]
\begin{center}
\includegraphics[width=\linewidth]{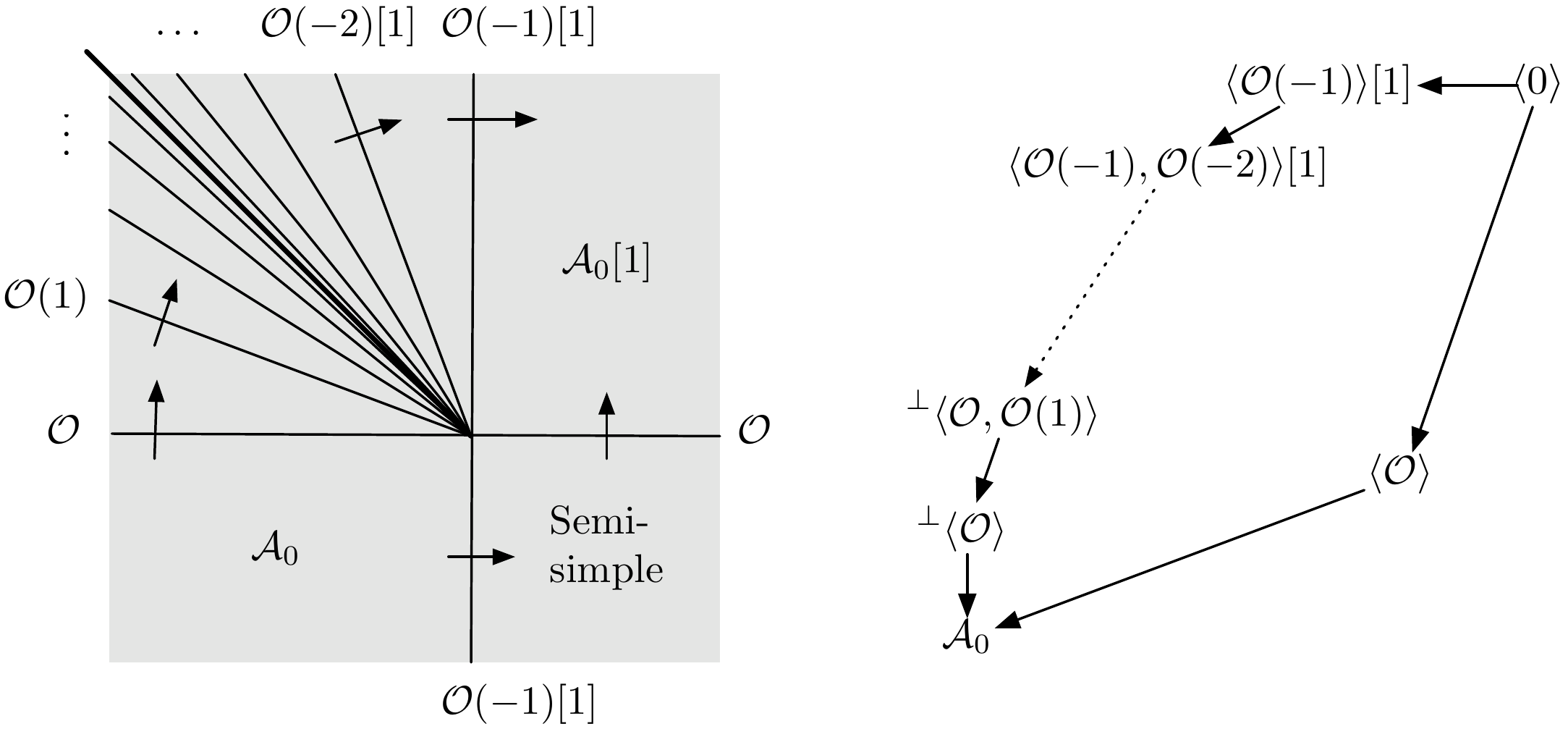}
\caption{The codimension $2$ strata in $\overline{U(\cat{A}_0 )}$ where $\cat{A}_0$ is the Kronecker heart, and the related torsion theories in $\cat{A}_0$. The interpretation is analogous to that of Figure~\ref{bdy picture}.}  
\label{kronecker bdy}
\end{center}
\end{figure} 

Assumption~\ref{assumption2} is false for the Kronecker heart, and we do not obtain an entire component of the space of stability conditions by tilting from it. There is a stability condition with heart  $\mathrm{Coh}(\P^1)$ and central charge $\mathcal{Z}(\mathcal{E}) = -\deg(\mathcal{E}) + i\, \mathrm{rank}(\mathcal{E})$.  The semi-stables of phase $\varphi$ are the semi-stable (in the usual sense) coherent sheaves of slope $-1/\tan(\pi\varphi)$.  Rotating this central charge we obtain stability conditions in $U(\cat{A}_0)$ \cite[Proposition 2.4]{MR2219846}. It follows that these stability conditions are in the same component. However, the coherent sheaves are not obtained from the Kronecker heart by any \emph{finite} sequence of simple tilts, although they are the left tilt of it at the torsion theory $\langle \mathcal{O}(-d)[1] \ | \ d\geq 1 \rangle$. This torsion theory contains infinitely many indecomposables, but it is still true that 
$$
U(\cat{A}_0)\cap \overline{U(\mathrm{Coh}(\P^1))} \neq \emptyset
$$
and in fact has the expected codimension, two. To see this we note that the standard stability condition on the coherent sheaves degenerates to one in which  the phase of $\mathcal{O}(d)$ is $0$ for $d<0$ and $1$ for $d\geq 0$ (and all $\mathcal{O}(d)$ remain semi-stable). The heart is therefore the Kronecker heart. We can freely choose the masses of $\mathcal{O}$ and $\mathcal{O}(-1)[1]$ in this degeneration and so obtain a (real) dimension two intersection as claimed. 

It is not however true that Corollary~\ref{intersection corollary converse} holds without any assumptions on the torsion theory. From Figure~\ref{kronecker bdy} we can see that
$$
U(\mathrm{Coh}(\P^1)[1])\cap \overline{U(\cat{A}_0)} = \emptyset
$$
even though $\cat{A}_0 = L_\cat{T} \mathrm{Coh}(\P^1)[1]$ for $\cat{T}={}^\perp\langle \mathcal{O}(-d)[1]\ |\ d\geq 1\rangle$. The degenerations of central charges which `should' give stability conditions in the intersection are forbidden because the charges of semi-stable (shifted) torsion sheaves vanish.

If we allow hearts obtained from a given one by any finite sequence of tilts (rather than just simple tilts) then we obtain an entire component of the space of stability conditions in this example. However, some tilts lead to hearts which are not the heart of any stability condition, see \cite[Remark 3.5]{MR2219846}. Proposition~\ref{constraints} still holds --- degenerations of the central charge for which the charge of no semi-stable vanishes lift to degenerations of stability conditions --- even though the given proof is invalid.

\end{document}